\documentclass{IEEEtran}

\usepackage{cite}
\usepackage{amsmath,amssymb,amsfonts}

\usepackage{amsthm}

\usepackage{algorithmic}
\usepackage{graphicx}
\usepackage{textcomp}
\usepackage{xcolor}

\usepackage{dsfont}
\usepackage{enumitem}

\usepackage{hyperref}

\title{Velocity Stabilization of a Wave Equation with a Nonlinear Dynamic Boundary Condition%
\thanks{Nicolas Vanspranghe and Christophe Prieur are with Univ. Grenoble Alpes, CNRS, Grenoble INP, GIPSA-lab, 38000 Grenoble, France. Email: name.surname@gipsa-lab.fr.}
\thanks{Francesco Ferrante is with Department of Engineering, University of Perugia, 06125 Perugia, Italy. Email: francesco.ferrante@unipg.it.}
\thanks{This work has been partially supported by MIAI@Grenoble Alpes (ANR-19-P3IA-0003).}
}
\author{Nicolas Vanspranghe, Francesco Ferrante, Christophe Prieur}

\newtheorem{theo}{Theorem}

\newtheorem{corollary}{Corollary}
\newtheorem{lemma}{Lemma}

\theoremstyle{remark}
\newtheorem{rem}{Remark}

\begin{document}

\maketitle

\begin{abstract}
This paper deals with a one-dimensional wave equation with a nonlinear dynamic boundary condition and a Neumann-type boundary control acting on the other extremity.  We consider a class of nonlinear stabilizing feedbacks that only depend on the velocity at the controlled extremity. The uncontrolled boundary is subject to a nonlinear first-order term, which may represent nonlinear boundary anti-damping. Initial data is taken in the optimal energy space associated with the problem. Exponential decay of the mechanical energy is investigated in different cases. Stability and attractivity of suitable invariant sets are established.
\end{abstract}

\section{Introduction}

In this paper, we investigate the problem of stabilizing a one-dimensional wave equation supplied with a dynamic boundary condition {by  means} of boundary control at the opposite end {of} the domain. By definition, dynamic (or kinetic) boundary conditions involve second-order time derivative and  arise in physical problems where the boundary (or a part of it) carries its own momentum. In a  one-dimensional medium, such boundary condition is for instance obtained when considering a tip mass at one endpoint of an elastic rod as in \cite{andrews_second_1996} for wave propagation or \cite{conrad_stabilization_1998} for Euler-Bernoulli beam dynamics. In higher space dimension, vibrating membranes with a given mass density can be modeled as dynamic boundary conditions as well
-- see, e.g., \cite{goldstein_derivation_2006}, \cite{gal_oscillatory_2003}, \cite{fourrier_regularity_2013}, \cite{vitillaro_wave_2017}.

We now specify the control problem under study.  Let $L$ be a positive real number and $\Omega \triangleq (0, L)$; we consider the system
\begin{subequations}
\label{eq:pde-bc}
\begin{align}
\label{eq:pure-wave}
&\partial_{tt}u - \partial_{xx}u = 0 & &\mbox{on}~\Omega \times \mathbb{R}^+,\\
\label{eq:DBC}
&\partial_{tt}u(0, t) - \partial_{x}u(0, t) = F(\partial_t u(0, t)) & &\mbox{for all}~t,\\
\label{eq:NBC}
&\partial_{x} u(L, t) = - g(\partial_t u(L, t))  & &\mbox{for all}~t,
\end{align}
\end{subequations}
where $g$ and $F$ are (real) scalar functions satisfying the following properties:
\begin{itemize}
\item $g$ is continuous, nondecreasing, and $g(0) = 0$;
\item $F$ is globally Lipschitz continuous, and $F(0) = 0$.
\end{itemize}
Equation \eqref{eq:pure-wave} is the standard wave equation on a segment. The function $F$ in the dynamic boundary condition \eqref{eq:DBC}  represents a nonlinear behavior at the boundary $x = 0$. This term can be used to model a destabilizing boundary \emph{anti-damping} phenomenon. Equation \eqref{eq:NBC} defines a nonlinear dissipative Neumann velocity feedback.  Such boundary feedback is modeled as an \emph{unbounded} input with respect to the natural energy space of the problem, which is introduced later on.

When $F$ represents a linear boundary damping term and \eqref{eq:NBC} is replaced with a homogenous Dirichlet condition, the stability analysis of the associated semigroup of linear contractions has  been investigated in \cite{morgul_stabilization_1994} and \cite{guo_spectrum-determined_2000}. Aside from the nonlinear aspect of our work, the difference with these papers lies in the fact that the feedback control considered here is anti-collocated with respect to the dynamic boundary condition.

In the control literature,
{the coupled dynamics described by \eqref{eq:pure-wave}-\eqref{eq:DBC}, or variants,} have very often been considered in the context of minimizing torsional vibrations along drilling rods due to nonlinear friction at the rock-tip interface, where the so-called \emph{stick-slip} phenomenon may occur and destabilize the plant -- see also the review paper \cite{saldivar_control_2016} and \cite{adly_nonsmooth_2020}. System \eqref{eq:pde-bc} can be seen as an infinite-dimensional model of such plant: the rod is seen as a purely elastic medium whose angular deformation obeys the wave equation \eqref{eq:pure-wave},
and the drilling tip is subject to nonlinear torsional friction, represented by $F$ at the rock interface, which yields \eqref{eq:DBC}.
However, {in contrast with the present paper}, most of the work dealing with drilling dynamics considers linearized equations.
For instance, in \cite{terrand-jeanne_regulation_2020}, stabilization and regulation using a proportional integral boundary controller is investigated; the system is linear but the elasticity of the propagation medium is allowed to be nonhomogenous.  In \cite{mlayeh_backstepping_2018}, an observer-based boundary control design is proposed. In \cite{roman_backstepping_2018}, a backstepping-based method is considered.
{Other related works include  \cite{smyshlyaev_boundary_2009} and \cite{bresch-pietri_output-feedback_2014}, where linear first-order boundary anti-damping is considered. On the other hand,} 
nonlinear boundary feedback for distributed parameter systems are considered in \cite{hastir_well-posedness_2019} and \cite{ramirez_stabilization_2017} -- see also  \cite{coron_dissipative_2008} for stability analysis of general quasilinear hyperbolic systems with (static) {dissipative} boundary conditions.

{
This paper analyzes the stability of system \eqref{eq:pde-bc} when both the coupled boundary dynamics given by \eqref{eq:DBC}
and the velocity feedback\footnote{In the preliminary conference version of this work \cite{vanspranghe_control_2020}, only linear velocity feedback is investigated for the stabilization of \eqref{eq:pure-wave}-\eqref{eq:DBC}.
} defined by \eqref{eq:NBC} are nonlinear. 
The contributions of this work are twofold:
\begin{itemize}
\item 
The stability analysis in the presence of nonlinear anti-damping, which is the key novelty of the paper, is carried out under the assumption that the feedback nonlinearity $g$ satisfies a global sector-like condition;
\item In the particular case that $F$ is nonincreasing,
 we are able to describe the asymptotic behavior of solutions even when the sector condition only holds for large values (e.g., when $g$ represents a deadzone nonlinearity).
\end{itemize}
}

The rest of the paper is organized as follows. Section \ref{sec:wp-prem} introduces the functional settings associated with system \eqref{eq:pde-bc} and states the well-posedness of the closed-loop dynamics.
 Section \ref{sec:stability} contains the stability results along with their proofs. As mentioned above two different cases are considered:
{first, when $F$ is nonincreasing, which implies that the mechanical energy is also nonincreasing along the trajectories of the system; second, when $F$ represents an anti-damping term that may render the system unstable without feedback action.}
Some concluding remarks are given in Section \ref{sec:conc}.

\paragraph*{Notation} The norm of a given Banach space $E$ is denoted by $\| \cdot \|_E$
. If $E$ is also a Hilbert space, its scalar product is written $(\cdot, \cdot)_E$.
 Also, for $T > 0$, we denote by $W^{1, p}(0, T; E)$ the subspace of $L^p(0, T; E)$ composed of (classes of) $E$-valued functions $f$ such that, {for some $h$ in $L^p(0, T; E)$ and $\xi$ in $E$,
$
f(t) = \xi + \int_{0}^{t} h(s) \, \mathrm{d}s
$}
for a.e. $t$ in $(0, T)$. Such class $f$ is identified with its continuous representative and we say that $f' = h$ in the sense of $E$-valued distributions.
If $E$ is a metric space endowed with a distance $d$, the distance between a element $x$ and a subset $F$ of $E$ is defined as follows: $\operatorname{dist}(x, F) \triangleq \inf_{y \in F} d(x, y)$.
Finally, if $s$ is a real number, we denote by $s^+$ its positive part, i.e., $s^+ \triangleq \max \{ s, 0 \}$.
\section{Well-posedness and preliminaries}
\label{sec:wp-prem}

Problems with dynamic boundary conditions require an appropriate modification of the usual state spaces, since the boundary value $\partial_t u(0, t)$ is expected to be a continuous function with respect to the time variable. We introduce the pivot space
\begin{equation}
H \triangleq L^2(\Omega) \times \mathbb{R},
\end{equation}
which is endowed with the product Hilbertian structure: for all $\mathbf{u}_1 = (u_1, \theta_1)$ and $ \mathbf{u}_2 = (u_2, \theta_2)$ in $H$,
\begin{equation}
(\mathbf{u}_1, \mathbf{u}_2)_H \triangleq \int_\Omega  u_1(x) u_2(x) \, \mathrm{d}x + \theta_1 \theta_2.
\end{equation}
 Define now the following subset of $H$:
\begin{equation}
V \triangleq \left \{ (u, u(0)) : u \in H^1(\Omega) \right \} \simeq H^1(\Omega).
\end{equation}
Then $V$ is exactly the graph of the evaluation mapping $ u \in H^1(\Omega) \mapsto u(0)$, which is continuous. Hence, $V$ is a closed subspace of $H^1(\Omega) \times \mathbb{R}$ by the closed graph theorem, and $V$ is a Hilbert space if equipped with the inherited scalar product. It can also be proved that $V$ is a dense subspace of $H$. We consider initial data in the energy space
\begin{equation}
\mathcal{H} \triangleq V \times H,
\end{equation}
on which \eqref{eq:pde-bc} is recast into a first-order Cauchy problem having the form
\begin{subequations}
\label{eq:cauchy-pb}
\begin{align}
\label{eq:strong-diff}
&\dot{\mathbf{X}}(t) + \mathcal{A}_g(\mathbf{X}(t)) = \mathcal{F}(\mathbf{X}(t)), \\
\label{eq:strong-data}
&\mathbf{X}(0) = \mathbf{X}^0.
\end{align}
\end{subequations}
\begin{rem}
For the sake of clarity, elements of the product space $H$ are denoted using parentheses, whereas elements of $\mathcal{H}$ are denoted using brackets, as in $\mathbf{v} = (v, \theta) \in H$ and $\mathbf{X} = [\mathbf{u}, \mathbf{v}] \in \mathcal{H}$.
\end{rem}
In  \eqref{eq:cauchy-pb}, $\mathcal{A}_g$ is an unbounded nonlinear operator, with domain $\mathcal{D}(\mathcal{A}_g)$, defined by
\begin{subequations}
\begin{align}
&\mathcal{D}(\mathcal{A}_g) \triangleq \left \{ [\mathbf{u}, \mathbf{v}] \in W \times V :  \partial_x u(L) = - g(v(L)) \right \}, \\
&\mathcal{A}_g([\mathbf{u}, \mathbf{v}]) \triangleq  -[ \mathbf{v}, (\partial_{xx} u, \partial_x u(0)) ],
\end{align}
\end{subequations}
where $W \triangleq \{ \mathbf{u} \in V : u \in H^2(\Omega) \}$; and $\mathcal{F}$ is the nonlinear perturbation operator on $\mathcal{H}$ associated with $F$.
Now, we define a bilinear symmetric form $a$ on $V \times V$ as follows:
\begin{equation}
a(\mathbf{u}_1, \mathbf{u}_2) = \int_\Omega \partial_x u_1(x) \partial_x u_2(x) \, \mathrm{d}x.
\end{equation}
Also, define an energy functional $\mathcal{E}$ on $\mathcal{H}$ by
\begin{equation}
\label{eq:def-ener}
\mathcal{E}(\mathbf{u}, \mathbf{v}) \triangleq \frac{1}{2}\{ a(\mathbf{u}, \mathbf{u}) + \|\mathbf{v}\|^2_H \}.
\end{equation}
Formal computations, which motivate our choice of functional spaces, give the \emph{energy identity}
\begin{equation}
\label{eq:energy-identity-formal}
\begin{aligned}
 \frac{\mathrm{d}}{\mathrm{d}t} \mathcal{E}(\mathbf{u}(t), \mathbf{u}'(t))  =  F(\partial_t u(0, t)) \partial_t u(0, t) \\ - g(\partial_t u(L, t)) \partial_t u(L, t);
\end{aligned}
\end{equation}
as well as the \emph{variational identity}
\begin{equation}
\label{eq:variational-identity-formal}
\begin{aligned}
\frac{\mathrm{d}}{\mathrm{d}t}\{  (\mathbf{u}'(t), \mathbf{w})_H \} + a(\mathbf{u}(t), \mathbf{w}) =  F(\partial_t u(0, t))w(0) \\ - g(\partial_t u(L, t))w(L)
\end{aligned}
\end{equation}
holding for all test-functions $\mathbf{w}$ in $V$.
We shall use the classical nonlinear semigroup terminology (see, e.g., \cite[Chapter IV]{showalter_monotone_2013}):
\begin{itemize}
\item \emph{Strong} solutions to \eqref{eq:pde-bc} are absolutely continuous $\mathcal{H}$-valued functions verifying \eqref{eq:strong-diff} for a.e. $t \in \mathbb{R}^+$, with initial data in the domain $\mathcal{D}(\mathcal{A}_g)$ of the generator;
\item \emph{Weak} solutions are limits of strong solutions with respect to the topology of $\mathcal{C}([0, T], \mathcal{H})$ for a given $T > 0$, with initial data in the energy space $\mathcal{H}$.
\end{itemize}
The well-posedness properties of the closed-loop system are summarized in the following theorem. We refer the reader to \cite{vanspranghe_control_2020} for the proof.

\begin{theo}[Hadamard well-posedness]
\label{th:had-wp}
 Let $[\mathbf{u}^0, \mathbf{v}^0] \in \mathcal{H}$. Then, there exists a unique weak solution $\mathbf{u} \in \mathcal{C}(\mathbb{R}^+, V) \cap \mathcal{C}^1(\mathbb{R}^+, H)$ to \eqref{eq:pde-bc}. The following statements also hold for weak solutions:
\begin{enumerate}
\item (Hidden regularity.)
The traces $\partial_t u(L, \cdot)$, $\partial_x u(0, \cdot)$ and  $ \partial_x u(L, \cdot)$ are defined in $L^2_\mathrm{loc}(\mathbb{R}^+)$; in particular, $u(0, \cdot) \in H^2_\mathrm{loc}(\mathbb{R}^+)$ and for a.e. $t \geq 0$,
\begin{subequations}
\begin{align}
&\partial_{tt} u(0, t) - \partial_x u(0, t) = F(\partial_t u(0, t)), \\
&\partial_x u(L, t) = - g(\partial_t u(L, t));
\end{align}
\end{subequations}
\item (Energy identity.) Weak solutions satisfy \eqref{eq:energy-identity-formal} in the scalar distribution sense;
\item (Variational identity.) Weak solutions satisfy \eqref{eq:variational-identity-formal} for all $\mathbf{w} \in V$ in the scalar distribution sense;
\item (\emph{A priori} estimate.) For all $\tau \geq 0$, there exists $C(\tau) \geq 0$ (solution independent) such that
\begin{equation}
\label{eq:apriori}
\sup_{t \in [0, \tau]} \mathcal{E}(\mathbf{u}(t), \mathbf{u}'(t)) \leq C(\tau) \mathcal{E}(\mathbf{u}(0), \mathbf{u}'(0)).
\end{equation}
\end{enumerate}
Weak solutions define a strongly continuous semigroup $\{\mathcal{S}_t \}$ of (nonlinear) continuous operators acting on $\mathcal{H}$.
Also, if $[\mathbf{u}^0, \mathbf{v}^0]$ satisfies the compatibility condition
\begin{equation}
\left \{
\begin{aligned}
&u^0 \in H^2(\Omega), \mathbf{v}^0 \in V, \\
&\partial_x u^0(L) = - g(v^0(L)),
\end{aligned}
\right.
\end{equation}
then $\mathbf{u}$ is in fact a \emph{strong} solution and enjoys the following additional regularity:
\begin{equation}
u \in L^\infty_\mathrm{loc}(\mathbb{R}^+, H^2(\Omega)),  \quad \mathbf{u}' \in  L^\infty_\mathrm{loc}(\mathbb{R}^+, V).
\end{equation}
\end{theo}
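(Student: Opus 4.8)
The plan is to read the Cauchy problem \eqref{eq:cauchy-pb} as a globally Lipschitz perturbation of a maximal monotone flow and then to invoke the nonlinear semigroup framework of \cite[Chapter IV]{showalter_monotone_2013}. First I would equip $\mathcal{H}$ with an inner product equivalent to the graph norm of $V \times H$ and show that $\mathcal{A}_g$ is monotone, possibly after adding a bounded multiple of the identity to compensate the lower-order terms introduced by that choice of norm. The monotonicity computation is the heart of the matter: for two states in $\mathcal{D}(\mathcal{A}_g)$, integrating by parts twice collapses $(\mathcal{A}_g \mathbf{X}_1 - \mathcal{A}_g \mathbf{X}_2, \mathbf{X}_1 - \mathbf{X}_2)_{\mathcal{H}}$ into the single boundary contribution $(g(v_1(L)) - g(v_2(L)))(v_1(L) - v_2(L))$, which is nonnegative because $g$ is nondecreasing. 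Crucially, the dynamic condition at $x = 0$ produces no leftover term, precisely because the velocity $\partial_t u(0,\cdot)$ has been promoted to a genuine state component.

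Next I would establish the range condition $\mathcal{R}(I + \lambda \mathcal{A}_g) = \mathcal{H}$. After eliminating the velocity, solving the resolvent equation reduces to an elliptic boundary value problem of the form $u - \partial_{xx} u = \text{(data)}$ on $\Omega$, with a Robin condition at $x = 0$ and the nonlinear condition $\partial_x u(L) = - g(u(L) - \text{data})$ at $x = L$. Casting this variationally on $H^1(\Omega)$, the operator on the left is the sum of a coercive bilinear form and the monotone, hemicontinuous operator induced by $g$ at the boundary, so surjectivity follows from the Minty--Browder theorem. This yields maximal monotonicity of $\mathcal{A}_g$ (up to the shift). Since $F$ is globally Lipschitz with $F(0) = 0$ and the velocity at $0$ is a state variable rather than a trace, $\mathcal{F}$ is globally Lipschitz on $\mathcal{H}$ with no loss of regularity; the standard Lipschitz-perturbation theorem then renders $\mathcal{A}_g - \mathcal{F}$ $m$-accretive after a further shift, hence the generator of a strongly continuous, quasi-contractive semigroup $\{\mathcal{S}_t\}$. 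This delivers global strong solutions for data in $\mathcal{D}(\mathcal{A}_g)$, weak solutions for data in $\overline{\mathcal{D}(\mathcal{A}_g)} = \mathcal{H}$, uniqueness, and continuous dependence.

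It then remains to verify the four listed properties. For strong solutions the energy identity \eqref{eq:energy-identity-formal} and the variational identity \eqref{eq:variational-identity-formal} follow by differentiating $\mathcal{E}$ and by testing the equation against $\mathbf{w} \in V$, respectively, using the pointwise boundary conditions; both extend to weak solutions by passing to the limit along approximating strong solutions. Integrating the energy identity, discarding the dissipative term $-g(\partial_t u(L,t))\partial_t u(L,t) \le 0$, and bounding $F(\partial_t u(0,t))\partial_t u(0,t) \le \|F\|_{\mathrm{Lip}}\,|\partial_t u(0,t)|^2 \le 2\|F\|_{\mathrm{Lip}}\,\mathcal{E}$, a Gronwall argument gives the a priori estimate \eqref{eq:apriori} with $C(\tau) = e^{2\|F\|_{\mathrm{Lip}}\tau}$.

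The main obstacle is the \emph{hidden regularity}: the traces $\partial_t u(L,\cdot)$, $\partial_x u(0,\cdot)$ and $\partial_x u(L,\cdot)$ are not controlled by the energy norm and must be recovered in $L^2_{\mathrm{loc}}(\mathbb{R}^+)$. I would obtain them by the multiplier method: multiplying \eqref{eq:pure-wave} by $x\,\partial_x u$ and integrating over $\Omega \times (0,T)$, repeated integration by parts produces an identity that bounds $\int_0^T \left( |\partial_x u(L,t)|^2 + |\partial_t u(L,t)|^2 \right) \mathrm{d}t$ by the time-integrated energy plus lower-order terms; the trace $\partial_x u(0,\cdot)$ and the regularity $u(0,\cdot) \in H^2_{\mathrm{loc}}(\mathbb{R}^+)$ then follow from the dynamic boundary condition, in which $\partial_t u(0,\cdot)$ is already continuous as a state component. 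These estimates are first derived for strong solutions, where every quantity is legitimate, and are uniform with respect to the approximation, so they transfer to weak solutions by density and lower semicontinuity. Getting the multiplier identity exactly right and justifying this limiting procedure is, I expect, the delicate part of the argument.
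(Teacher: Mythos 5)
Your overall architecture is the intended one: the paper does not prove Theorem~\ref{th:had-wp} in-text but defers to the conference version \cite{vanspranghe_control_2020}, and the route that version (and the paper's setup around \eqref{eq:cauchy-pb}, $\mathcal{D}(\mathcal{A}_g)$, and the citation of \cite{showalter_monotone_2013}) anticipates is exactly yours: an equivalent inner product on $\mathcal{H}$, monotonicity of $\mathcal{A}_g$ up to a bounded shift with the pairing collapsing, after integration by parts, to the single term $\bigl(g(v_1(L)) - g(v_2(L))\bigr)\bigl(v_1(L) - v_2(L)\bigr) \geq 0$ and no residue at $x = 0$ (this is precisely why $\partial_t u(0,\cdot)$ is promoted to a state component), Minty--Browder for the resolvent problem in the one-dimensional setting where the trace map makes the boundary nonlinearity harmless, a globally Lipschitz perturbation $\mathcal{F}$ acting on the $\mathbb{R}$-component of $H$, and Gr\"onwall for \eqref{eq:apriori} with $C(\tau) = e^{2\|F\|_{\mathrm{Lip}}\tau}$, which is correct. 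Minor items you should not leave implicit: the density $\overline{\mathcal{D}(\mathcal{A}_g)} = \mathcal{H}$ must be verified for the weak-solution claim, and Browder--Minty requires boundedness and coercivity of the variational operator, which here rests on the continuity of the trace in $H^1(\Omega)$ and on $g(s)s \geq 0$.

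There is one step that would fail as written: the recovery of the trace at $x = 0$. Your multiplier $x\,\partial_x u$ has a weight vanishing at $x = 0$, so the resulting identity controls only the traces at $x = L$; and the claim that $\partial_x u(0,\cdot) \in L^2_{\mathrm{loc}}$ ``follows from the dynamic boundary condition'' is circular. Read as an identity, \eqref{eq:DBC} gives $\partial_x u(0,t) = \partial_{tt} u(0,t) - F(\partial_t u(0,t))$, but $\partial_{tt} u(0,\cdot)$ is not controlled by the energy: the fact that $\partial_t u(0,\cdot)$ is continuous as a state component yields no $L^2$ bound on its time derivative, uniform or otherwise, along the approximating strong solutions. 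The fix is a second multiplier with weight not vanishing at $0$ --- e.g., $\rho \equiv 1$, or the affine positive increasing $\rho$ the paper itself uses to pass from \eqref{eq:mult-id} to \eqref{eq:mult-id-bis} --- which bounds $\int_0^\tau |\partial_x u(0,t)|^2 + |\partial_t u(0,t)|^2 \, \mathrm{d}t$ by the already-controlled traces at $L$ plus pointwise energy terms. Only after this does the boundary condition, read as an ODE in $t$, deliver $\partial_{tt} u(0,\cdot) \in L^2_{\mathrm{loc}}$, hence $u(0,\cdot) \in H^2_{\mathrm{loc}}$ and the a.e.\ identities in item 1: the deduction must run multiplier estimate $\Rightarrow$ trace $\Rightarrow$ regularity of the boundary ODE, not the reverse. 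With that correction, and the uniformity of both multiplier estimates along approximating strong solutions that you already invoke, your limiting argument goes through.
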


\begin{rem}
It is in fact proved that unicity holds for the class of functions $\mathbf{u} \in \mathcal{C}(\mathbb{R}^+, V) \cap \mathcal{C}^1(\mathbb{R}^+, H)$ verifying the distributional identity \eqref{eq:variational-identity-formal}.
\end{rem}

\section{Stability analysis of the closed-loop system}
\label{sec:stability}

For any given weak solution $\mathbf{u}$ to \eqref{eq:pde-bc}, we denote by  $\mathcal{E}^\mathbf{u}$ the continuous function on $\mathbb{R}^+$ defined by
\begin{equation}
\mathcal{E}^\mathbf{u}(t) \triangleq \mathcal{E}(\mathbf{u}(t), \mathbf{u}'(t)).
\end{equation}
Rewriting \eqref{eq:def-ener} yields
\begin{equation}
\label{eq:ener-exp}
\mathcal{E}^\mathbf{u}(t) =  \frac{1}{2} \int_\Omega |\partial_x u|^2 + |\partial_t u|^2 \, \mathrm{d}x + \frac{1}{2}  |\partial_t u(0, t)|^2.
\end{equation}
For any $\tau \geq 0$, the integral form of \eqref{eq:energy-identity-formal} reads as follows:
\begin{equation}
\label{eq:energy-identity}
\begin{aligned}
\mathcal{E}^\mathbf{u}(t) \bigg |_0^\tau &= \int_0^\tau F(\partial_t u(0, t)) \partial_t u(0, t) \, \mathrm{d}t \\
&- \int_0^\tau g(\partial_t u(L, t)) \partial_t u(L, t) \, \mathrm{d}t.
\end{aligned}
\end{equation}
We consider two different situations:
\begin{enumerate}
\item The \emph{monotone} case where we assume that $F$ is nonincreasing, so that the energy $\mathcal{E}$ is non-increasing along the trajectories of the system;
\item The \emph{anti-damping} case, in which no prior knowledge on the sign of $F$ is assumed, meaning that the perturbation may provide energy to the system.
\end{enumerate}
{The analysis in the monotone case is built upon a multiplier method. This yields suitable integral estimates, from which the desired decay properties are deduced by taking  advantage of the nonincreasingness of the energy $\mathcal{E}$ and using an iterated sequence argument. On the other hand, }
the anti-damping case is dealt with using an appropriate Lyapunov functional {designed to exhibit the coupling between boundary perturbation at $x = 0$ and (nonlinear) dissipation at $x = L$}.  In addition, the anti-damping case is considered under both \emph{local} and \emph{global} growth assumption on the nonlinear term $F$.

\begin{rem}
All calculations performed below are justified without further comment using the additional regularity of strong solutions and the usual density arguments. In particular, for any $\tau \geq 0$, we can assume that $u \in H^2( \Omega \times(0, \tau))$; also, the additional boundary terms converge in $L^2(0, \tau)$.
\end{rem}

\subsection{The monotone case}

The first stability result of this paper is given next.

\begin{theo}[Stability in the monotone case]
\label{th:stability-monotone}
Suppose that $F$ is nonincreasing and there are some positive  constants $\alpha_1 \leq \alpha_2$ and a nonnegative $S$ such that $g$ satisfies
\begin{equation}
\label{eq:cond-sect}
\quad \alpha_1 |s| \leq |g(s)| \leq \alpha_2 |s| \quad \mbox{for all}~ s  ~\mbox{with}~ |s| \geq S,
\end{equation}
Then, there exists $E_S \geq 0$, $M >0$ and $\mu > 0$ such that, for any solution $\mathbf{u}$ to \eqref{eq:pde-bc}, for all $t \geq 0$,
\begin{equation}
\label{eq:stab-monotone}
\{ \mathcal{E}^\mathbf{u}(t) - {E}_S \}^+ \leq M \exp(-\mu t) \{ \mathcal{E}^\mathbf{u}(0) - {E}_S \}^+,
\end{equation}
where the superscript $+$ denotes the positive part.
Furthermore, if \eqref{eq:cond-sect} holds with $S = 0$, then \eqref{eq:stab-monotone} holds with ${E}_S = 0$, i.e.,
the energy $\mathcal{E}$ converges to zero exponentially.
\end{theo}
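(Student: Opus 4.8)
My plan is to prove the result by establishing, for a single well-chosen time horizon $T_0 > 0$, a one-step decay-with-threshold estimate of the form $\{\mathcal{E}^\mathbf{u}(T_0) - E_S\}^+ \leq \frac{1}{2}\{\mathcal{E}^\mathbf{u}(0) - E_S\}^+$ valid for \emph{every} solution, and then to propagate it by the semigroup property. The starting point is dissipativity: since $F$ is nonincreasing with $F(0)=0$ one has $sF(s) \leq 0$, and since $g$ is nondecreasing with $g(0)=0$ one has $sg(s) \geq 0$; reading \eqref{eq:energy-identity} then shows both that $\mathcal{E}^\mathbf{u}$ is nonincreasing and that the boundary dissipation obeys $\int_0^{T_0} g(\partial_t u(L,t))\partial_t u(L,t)\,\mathrm{d}t \leq \mathcal{E}^\mathbf{u}(0) - \mathcal{E}^\mathbf{u}(T_0)$. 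Note that the quantitative dissipation comes only from $x=L$; $F$ is never controlled from below, consistent with the hypotheses.

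I would then run the multiplier method on \eqref{eq:pure-wave}, justified on strong solutions and extended by density. Multiplying by $x\,\partial_x u$ and integrating over $\Omega \times (0,T_0)$ yields an identity whose interior term is exactly $\int_0^{T_0} E_{\mathrm{field}}(t)\,\mathrm{d}t$ with $E_{\mathrm{field}}(t) = \frac{1}{2}\int_\Omega(|\partial_x u|^2 + |\partial_t u|^2)\,\mathrm{d}x$, and whose only surviving spatial boundary contribution sits at $x=L$ (the multiplier vanishes at $x=0$, conveniently sidestepping the dynamic boundary condition). This controls $\int_0^{T_0} E_{\mathrm{field}}$ by $\int_0^{T_0}(|\partial_t u(L,t)|^2 + |\partial_x u(L,t)|^2)\,\mathrm{d}t$ up to the time-boundary term $[\int_\Omega x\,\partial_t u\,\partial_x u\,\mathrm{d}x]_0^{T_0}$, which is itself $\lesssim E_{\mathrm{field}}(0) + E_{\mathrm{field}}(T_0) \leq \mathcal{E}^\mathbf{u}(0) + \mathcal{E}^\mathbf{u}(T_0)$. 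A companion multiplier $(x-L)\,\partial_x u$ recovers the boundary kinetic term: dropping the favorably-signed $|\partial_x u(0,t)|^2$ contribution, it bounds $\int_0^{T_0}|\partial_t u(0,t)|^2\,\mathrm{d}t$ by $\int_0^{T_0} E_{\mathrm{field}}$ plus an analogous time-boundary term, so that the \emph{full} energy $\int_0^{T_0}\mathcal{E}^\mathbf{u}(t)\,\mathrm{d}t = \int_0^{T_0} E_{\mathrm{field}}\,\mathrm{d}t + \frac{1}{2}\int_0^{T_0}|\partial_t u(0,t)|^2\,\mathrm{d}t$ is controlled.

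At $x=L$ I substitute $\partial_x u(L,t) = -g(\partial_t u(L,t))$ and split the time interval according to whether $|\partial_t u(L,t)| \geq S$. On the high-velocity set, \eqref{eq:cond-sect} gives $|\partial_t u(L)|^2 + |g(\partial_t u(L))|^2 \leq (1+\alpha_2^2)|\partial_t u(L)|^2 \leq \alpha_1^{-1}(1+\alpha_2^2)\,g(\partial_t u(L))\partial_t u(L)$, absorbing this part into the dissipation, hence into $\mathcal{E}^\mathbf{u}(0) - \mathcal{E}^\mathbf{u}(T_0)$. On the low-velocity set the integrand is bounded by the fixed constant $K_S := S^2 + \max_{|s|\leq S}|g(s)|^2$ (using continuity of $g$), contributing at most $K_S T_0$. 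Collecting everything gives $\int_0^{T_0}\mathcal{E}^\mathbf{u} \leq C_1(\mathcal{E}^\mathbf{u}(0) - \mathcal{E}^\mathbf{u}(T_0)) + C_3(\mathcal{E}^\mathbf{u}(0) + \mathcal{E}^\mathbf{u}(T_0)) + C_2 K_S T_0$. Since $\mathcal{E}^\mathbf{u}$ is nonincreasing the left side is at least $T_0\,\mathcal{E}^\mathbf{u}(T_0)$; absorbing the $\mathcal{E}^\mathbf{u}(T_0)$ terms and choosing $T_0$ large makes the coefficient of $\mathcal{E}^\mathbf{u}(0)$ at most $\frac{1}{2}$ while the additive term stays a fixed multiple of $K_S$ independent of $T_0$ and of the solution. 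Defining $E_S$ as twice that term converts this into $\{\mathcal{E}^\mathbf{u}(T_0) - E_S\}^+ \leq \frac{1}{2}\{\mathcal{E}^\mathbf{u}(0) - E_S\}^+$.

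Finally, autonomy lets me apply this one-step estimate on each window $[nT_0, (n+1)T_0]$, giving $\{\mathcal{E}^\mathbf{u}(nT_0) - E_S\}^+ \leq 2^{-n}\{\mathcal{E}^\mathbf{u}(0) - E_S\}^+$; monotonicity of the energy fills in intermediate times and yields \eqref{eq:stab-monotone} with $\mu = (\ln 2)/T_0$ and $M = 2$ — this is the iterated sequence argument. When $S=0$ the low-velocity set is empty, $K_S = 0$, hence $E_S = 0$, and the estimate collapses to exponential decay of $\mathcal{E}^\mathbf{u}$ to zero. The step I expect to be most delicate is the multiplier analysis in the presence of the dynamic boundary condition: genuinely recovering and controlling the boundary kinetic term $\frac{1}{2}|\partial_t u(0,t)|^2$ (which is what forces the second multiplier $(x-L)\,\partial_x u$) and verifying that no uncontrolled contribution from $\partial_x u(0,t)$ or from $F$ survives — here the favorable sign of the discarded $|\partial_x u(0)|^2$ term and of $sF(s)$ are precisely what let the argument close.
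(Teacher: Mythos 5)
Your proposal is correct and follows essentially the same route as the paper: a multiplier estimate with affine weights (your pair $x\,\partial_x u$ and $(x-L)\,\partial_x u$ sums to the paper's single affine, positive, increasing weight $\rho$, with the same favorable signs discarded at $x=0$), the same splitting of the boundary dissipation at $x=L$ by the threshold $S$, and the same iterated one-step contraction $\mathcal{E}^\mathbf{u}((k+1)\tau) \leq r\,\mathcal{E}^\mathbf{u}(k\tau) + p$ with offset $E_S = p/(1-r)$ propagated by autonomy and filled in by monotonicity. The only cosmetic differences are that you normalize $r = 1/2$ by taking $T_0$ large instead of keeping $r = 1/(1+1/C_1)$, and your remark that the low-velocity set is ``empty'' when $S=0$ should read that its contribution vanishes since $g(0)=0$ — neither affects correctness.
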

We now state a consequence of Theorem \ref{th:stability-monotone} in terms of attractive sets.
\begin{corollary}
\label{coro:attr-mono}
Under the hypotheses of Theorem \ref{th:stability-monotone}, the (positively invariant) set $\mathfrak{B}$ defined by
\begin{equation}
\mathfrak{B}\triangleq \left \{ [\mathbf{u}, \mathbf{v}] \in \mathcal{H} : \mathcal{E}(\mathbf{u}, \mathbf{v}) \leq E_S \right \}
\end{equation}
enjoys the following uniform attractivity property:
\begin{equation}
\label{eq:attr-mono}
\operatorname{dist}([\mathbf{u}(t), \mathbf{u}'(t)], \mathfrak{B})^2 \leq M' \exp(- \mu t) \{ \mathcal{E}^\mathbf{u}(0) - E_S \}^+,
\end{equation}
where $M'$ is a positive constant and $\mu$ is the decay rate appearing in Theorem \ref{th:stability-monotone}.
\end{corollary}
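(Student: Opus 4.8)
The plan is to derive \eqref{eq:attr-mono} from Theorem~\ref{th:stability-monotone} by separating a \emph{dynamical} ingredient (already contained in the theorem) from a purely \emph{geometric} one. The dynamical content is \eqref{eq:stab-monotone}; the geometric content is the solution-independent comparison
\begin{equation}
\operatorname{dist}(\mathbf{X}, \mathfrak{B})^2 \leq C \{ \mathcal{E}(\mathbf{X}) - E_S \}^+ \quad \text{for all } \mathbf{X} \in \mathcal{H},
\end{equation}
for some constant $C > 0$. Granting this, I would apply it to $\mathbf{X} = [\mathbf{u}(t), \mathbf{u}'(t)]$ and invoke \eqref{eq:stab-monotone} to obtain \eqref{eq:attr-mono} with $M' = CM$. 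Positive invariance of $\mathfrak{B}$ is then the degenerate case of the theorem: if $\mathcal{E}^\mathbf{u}(0) \leq E_S$ then $\{ \mathcal{E}^\mathbf{u}(0) - E_S \}^+ = 0$, so \eqref{eq:stab-monotone} forces $\mathcal{E}^\mathbf{u}(t) \leq E_S$ for all $t$, i.e.\ the trajectory never leaves $\mathfrak{B}$.

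The crux is the comparison inequality, and the main obstacle is that $\mathcal{E}$ is \emph{not} coercive on $\mathcal{H}$: constant displacements carry zero energy, so $\mathcal{E}$ is only a seminorm on $V$ and $\mathfrak{B}$ is unbounded in the displacement direction. Hence the naive idea of contracting $\mathbf{X}$ toward the origin fails to yield a sharp distance bound, since the $L^2$-size of the displacement is not controlled by $\mathcal{E}(\mathbf{X})$. The remedy is to contract toward the \emph{mean} of the displacement rather than toward zero. For $\mathbf{X} = [(u, u(0)), (v, \theta)] \in \mathcal{H}$ with $\mathcal{E}(\mathbf{X}) > E_S$ (the case $\mathcal{E}(\mathbf{X}) \leq E_S$ being trivial, as then $\mathbf{X} \in \mathfrak{B}$ and both sides vanish), I would set $c \triangleq \tfrac{1}{L}\int_\Omega u$ and define, for $\lambda \in [0, 1]$,
\begin{equation}
\mathbf{Y}_\lambda \triangleq [(c + \lambda(u - c),\, c + \lambda(u(0) - c)),\, (\lambda v, \lambda \theta)],
\end{equation}
which lies in $\mathcal{H}$ because $x \mapsto c + \lambda(u(x) - c)$ is in $H^1(\Omega)$ with trace at $0$ given by the second component.

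Since $a(\cdot, \cdot)$ depends only on the gradient, a direct computation gives $\mathcal{E}(\mathbf{Y}_\lambda) = \lambda^2 \mathcal{E}(\mathbf{X})$, so the choice $\lambda = \sqrt{E_S / \mathcal{E}(\mathbf{X})} \in [0, 1)$ places $\mathbf{Y}_\lambda$ on the boundary of $\mathfrak{B}$. It then remains to estimate $\|\mathbf{X} - \mathbf{Y}_\lambda\|_\mathcal{H}^2 = (1 - \lambda)^2 \{ \|u - c\|_V^2 + \|\mathbf{v}\|_H^2 \}$. This is where the Poincaré--Wirtinger inequality enters: applied to the zero-mean function $u - c$, it controls $\|u - c\|_V^2$ (equivalently its full $H^1$-norm, trace at $0$ included, by continuity of the evaluation map) by a constant multiple of $\|\partial_x u\|_{L^2}^2 \leq 2\mathcal{E}(\mathbf{X})$, whence $\|u - c\|_V^2 + \|\mathbf{v}\|_H^2 \leq C' \mathcal{E}(\mathbf{X})$.

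Combining this with the elementary bound $(1 - \lambda)^2 \mathcal{E}(\mathbf{X}) = (\sqrt{\mathcal{E}(\mathbf{X})} - \sqrt{E_S})^2 \leq \mathcal{E}(\mathbf{X}) - E_S$ delivers the comparison inequality with $C = C'$, and hence the corollary. I expect the only genuinely delicate point to be the invocation of Poincaré--Wirtinger to absorb the uncontrolled mean-displacement mode; every remaining step is an explicit identity or a one-line elementary estimate. Note finally that the argument is uniform in $E_S \geq 0$ and in particular degenerates correctly to $\lambda = 0$ when $E_S = 0$, recovering the pure exponential decay of $\mathcal{E}$ toward the set of rest states.
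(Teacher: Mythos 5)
Your proof is correct and follows essentially the same route as the paper: the paper likewise reduces \eqref{eq:attr-mono} to the solution-independent comparison $\operatorname{dist}(\mathbf{X}, \mathfrak{B})^2 \leq K \{ \mathcal{E}(\mathbf{X}) - E_S \}^+$ (its Lemma~\ref{lem:dist-E}), proved via the Poincar\'e--Wirtinger inequality and the same contraction of the zero-mean part of $\mathbf{X}$ by the factor $\{E_S/\mathcal{E}(\mathbf{X})\}^{1/2}$, and then combines it with \eqref{eq:stab-monotone}. The only cosmetic differences are that you construct the contracted point $\mathbf{Y}_\lambda$ explicitly where the paper phrases the same construction through the orthogonal projection onto the zero-mean subspace $\ker \Phi$, and that you deduce positive invariance of $\mathfrak{B}$ from \eqref{eq:stab-monotone} whereas the paper invokes the monotonicity of $\mathcal{E}$ along trajectories.
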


The rest of the subsection is devoted to the proofs of Theorem \ref{th:stability-monotone} and Corollary \ref{coro:attr-mono}.

\begin{proof}[Proof of Theorem \ref{th:stability-monotone}]
The proof is split into three steps.

\textbf{Step 1: Preliminary estimates.}
Let $\rho \in H^1(\Omega)$. Pick $\tau \geq 0$ and $\mathbf{u}$ a weak solution to \eqref{eq:pde-bc}. We multiply the wave equation
$
\partial_{tt} u - \partial_{xx} u = 0$ on $Q\tau \triangleq \Omega \times (0, T)$
by $2\rho(x) \partial_x u$, and then we integrate over $Q_\tau$:
\begin{equation}
\label{eq:wave-multiplier}
\iint_{Q_\tau} 2\rho(x) \partial_x u  \partial_{tt}u \, \mathrm{d}x \, \mathrm{d}t - \iint_{Q_\tau} 2 \rho(x) \partial_x u \partial_{xx} u  \, \mathrm{d}x \, \mathrm{d}t = 0
\end{equation}
Integrating by parts with respect to $t$  in the first term and $x$ in the second term of \eqref{eq:wave-multiplier} yields
\begin{equation}
\begin{aligned}
 \int_\Omega 2\rho(x) \partial_t u \partial_x u \, \mathrm{d}x \bigg |_0^\tau  - \iint_{Q_\tau} 2\rho(x) \partial_t u  \partial_{tx}u
\\
+ \iint_{Q_\tau} 2\left \{ \rho'(x) \partial_x u + \rho(x) \partial_{xx} u\right \} \partial_x u \, \mathrm{d}x \, \mathrm{d}t
\\ - 2 \int_0^\tau \left [ \rho(x) |\partial_x u|^2 \right ]_0^L = 0
\end{aligned}
\end{equation}
After another integration by parts with respect to the $x$,  we obtain the following standard multiplier identity:
\begin{equation}
\label{eq:mult-id}
\begin{aligned}
\int_\Omega 2\rho(x) \partial_t u \, \partial_x u\, \mathrm{d}x \bigg |_0^\tau + \iint_{Q_\tau} \rho'(x) \left \{ |\partial_t u|^2 +| \partial_x u|^2 \right \} \mathrm{d}x \, \mathrm{d}t \\
- \int_0^\tau  \left [ \rho(x)  \left \{ |\partial_t u|^2 +| \partial_x u|^2 \right \} \right ]_0^L \mathrm{d}t   = 0.
\end{aligned}
\end{equation}
From now on, we take any $\rho$ affine, positive, and increasing.
Then, we can rewrite \eqref{eq:mult-id} as
\begin{equation}
\label{eq:mult-id-bis}
\begin{aligned}
\int_0^\tau \left [ \rho(0) |\partial_t u(0, t)|^2 +  \int_\Omega \rho' \left \{ |\partial_t u|^2 + |\partial_x u|^2 \right \} \, \mathrm{d}x \right ] \mathrm{d}t
\\ = \int_\Omega 2\rho(x) \partial_t u \, \partial_x u\, \mathrm{d}x \bigg |_0^\tau  - \rho(0) \int_0^\tau |\partial_x u(0, t)|^2 \, \mathrm{d}t
\\ + \rho(L) \int_0^\tau |\partial_t u(L, t)|^2 + |\partial_x u(L, t)|^2 \, \mathrm{d}t
\end{aligned}
\end{equation}
Looking at \eqref{eq:ener-exp},  from \eqref{eq:mult-id-bis} one has that
\begin{equation}
\label{eq:est-bef-BC}
\begin{aligned}
C_1 \int_0^\tau \mathcal{E}^\mathbf{u}(t) \, \mathrm{d}t \leq C_2 \left \{ \mathcal{E}^\mathbf{u}(0) + \mathcal{E}^\mathbf{u}(\tau) \right \} \\ + C_3 \int_0^\tau  |\partial_t u(L, t)|^2 + |\partial_x u(L, t)|^2 \, \mathrm{d}t,
\end{aligned}
\end{equation}
where the Cauchy-Schwarz inequality  is used to bound the cross term in \eqref{eq:mult-id-bis}, and $C_1$, $C_2$, and $C_3$ are positive constants given by
\begin{equation}
C_1 = \min  \{  \rho(0), \rho' \}, \quad C_2 = 2\rho(L), \quad \mbox{and}~ C_3 = \rho(L).
\end{equation}
Note that the estimate \eqref{eq:est-bef-BC} holds uniformly for all solutions $\mathbf{u}$ and all times $\tau \geq 0$, with the constants depending only on the particular choice of $\rho$.
Starting from \eqref{eq:est-bef-BC}, we need to obtain an estimate where the only energy term is $\mathcal{E}^\mathbf{u}(\tau)$. Since $\mathcal{E}^\mathbf{u}$ is nonincreasing,
\begin{equation}
\label{eq:ineq-dec}
\int_0^\tau \mathcal{E}^\mathbf{u}(t) \, \mathrm{d}t \geq \tau \mathcal{E}^\mathbf{u}(\tau).
\end{equation}
From the energy identity \eqref{eq:energy-identity}, there also comes
\begin{equation}
\label{eq:ineq-var}
\mathcal{E}^\mathbf{u}(0) \leq \mathcal{E}^\mathbf{u}(\tau) + \int_0^\tau g(\partial_t u(L, t)) \partial_t u(L, t) \, \mathrm{d}t.
\end{equation}
Plugging \eqref{eq:ineq-dec} and \eqref{eq:ineq-var} into \eqref{eq:est-bef-BC} yields
\begin{equation}
\label{eq:ineq-e-tau}
\begin{aligned}
\tau C_1 \mathcal{E}^\mathbf{u}(\tau) \leq 2 C_2 \mathcal{E}^\mathbf{u}(\tau) + C_2 \int_0^\tau g(\partial_t u(L, t)) \partial_t u(L, t) \, \mathrm{d}t
\\ + C_3 \int_0^\tau  |\partial_t u(L, t)|^2 + |\partial_x u(L, t)|^2 \, \mathrm{d}t.
\end{aligned}
\end{equation}
Choosing any $\tau$ such that $\tau C_1 \geq 2 C_2 + 1$ and plugging the boundary condition into \eqref{eq:ineq-e-tau}, we finally have
\begin{equation}
\begin{aligned}
\label{eq:ineq-e-tau-bis}
\mathcal{E}^\mathbf{u}(\tau) \leq C_2 \int_0^\tau  g(\partial_t u(L, t)) \partial_t u(L, t) \, \mathrm{d}t
\\ + C_3 \int_0^\tau |\partial_t u(L, t)|^2 +  |g(\partial_t u(L,t))|^2 \, \mathrm{d}t.
\end{aligned}
\end{equation}

\textbf{Step 2: Using the boundary dissipation.}
First, consider a particular solution $\mathbf{u}$.
Similarly as in \cite{chueshov_attractor_2002}, pick a set $I_\tau$ such that
\begin{equation}
\left \{
\begin{aligned}
&|\partial_t u(L, t)| \leq S & & \mbox{for a.e.}~ t \in I_\tau, \\
&|\partial_t u(L, t)| > S  & &\mbox{for a.e.}~ t \in (0, \tau) \setminus I_\tau.
\end{aligned}
\right.
\end{equation}
This set depends on the specific solution $\mathbf{u}$. We can write
\begin{equation}
\begin{aligned}
\int_0^\tau |\partial_t u(L, t)|^2 \, \mathrm{d}t &= \int_{I_\tau}|\partial_t u(L, t)|^2 \\ &+ \int_{(0, \tau) \setminus I_\tau} [\partial_t u(L, t)|^2 \, \mathrm{d}t
\\ &\leq \frac{1}{\alpha_1} \int_0^\tau g(\partial_t u(L, t)) \partial_t u(L, t) \, \mathrm{d}t + \tau S^2.
\end{aligned}
\end{equation}
Similarly, we have
\begin{equation}
\begin{aligned}
\int_0^\tau |g(\partial_t u(L, t)|^2 \, \mathrm{d}t \leq \alpha_2  \int_0^\tau g(\partial_t u(L, t)) \partial_t u(L, t) \, \mathrm{d}t \\ + \tau \sup_{|s| \leq S} |g(s)|^2.
\end{aligned}
\end{equation}
In the end, coming back to \eqref{eq:ineq-e-tau-bis}, we obtain
\begin{equation}
\label{eq:est-aff}
\mathcal{E}^\mathbf{u}(\tau) \leq C_1 \int_0^\tau g(\partial_t u(L, t)) \partial_t u(L, t) \, \mathrm{d}t + C_2(\tau),
\end{equation}
where $C_1 > 0$ depends only on the previous constants in \eqref{eq:ineq-e-tau-bis} and the scalar function $g$, and $C_2(\tau) \geq 0$ is given by
\begin{equation}
C_2(\tau) \triangleq \tau C_3 \max \left \{S^2, \sup_{|s| \leq S} |g(s)|^2 \right \}.
\end{equation}
Note that in  \eqref{eq:est-aff} the dependence on the particular solution $\mathbf{u}$ has been suppressed. In addition, if $S = 0$, we can take $C_2(\tau) = 0$.

\textbf{Step 3: Conclusion.}
Using the energy identity \eqref{eq:energy-identity}, we get
\begin{equation}
\mathcal{E}^\mathbf{u}(\tau) \leq \frac{1}{1 + 1/C_1}\mathcal{E}^\mathbf{u}(0) + \frac{C_2(\tau)}{1 + 1/C_1}
\end{equation}
holding for any solution $\mathbf{u}$.
We can iterate: for all $k \geq 1$,
\begin{equation}
\label{eq:ari-geo}
\mathcal{E}^\mathbf{u}((k+1)\tau) \leq  \frac{1}{1 + 1/C_1}  \mathcal{E}^\mathbf{u}(k\tau) + \frac{C_2(\tau)}{1 + 1/C_1}.
\end{equation}
Let us write
\begin{equation}
r \triangleq \frac{1}{1 + 1/C_1},\quad p \triangleq \frac{C_2(\tau)}{1 + 1/C_1},\quad  \mbox{and}~ E_S \triangleq \frac{p}{1 - r}.
\end{equation}
Define a sequence $\{\mathcal{E}^\mathbf{u}_k \}_{k \geq 0}$ as follows:
\begin{equation}
\mathcal{E}^\mathbf{u}_k \triangleq \left \{
\begin{aligned}
& \mathcal{E}^\mathbf{u}(k\tau) & &\mbox{if}~ \mathcal{E}^\mathbf{u}(k\tau) \geq E_S, \\
& E_S & &\mbox{else,}
\end{aligned}
\right.
\end{equation}
so that $ \mathcal{E}^\mathbf{u}_k - E_S \geq 0$ for all $k \geq 0$.
Then,  from \eqref{eq:ari-geo} one gets
\begin{equation}
\label{eq:est-seq}
0 \leq \mathcal{E}^\mathbf{u}_k - E_S \leq r^k (\mathcal{E}^\mathbf{u}_0 - E_S)
\end{equation}
 with $ |r| < 1 $. Writing $\alpha \triangleq \ln(1/r) > 0$, \eqref{eq:est-seq} reads
\begin{equation}
\label{eq:est-seq-bis}
\mathcal{E}_k^\mathbf{u} - E_S \leq \exp(- \alpha k ) (\mathcal{E}^\mathbf{u}_0 - E_S) \quad \mbox{for all}~ k \geq 0.
\end{equation}

Now observe that
\begin{equation}
\mathcal{E}^\mathbf{u}_k - E_S = \{ \mathcal{E}^\mathbf{u}(\tau k) - E_S\}^+  \quad \mbox{for all}~ k \geq 0.
\end{equation}
On the other hand, since $\mathcal{E}^\mathbf{u}$ is nonincreasing, for all $k \tau  \leq t \leq (k+1)\tau$,
\begin{equation}
\mathcal{E}^\mathbf{u}(\tau k) - E_S \leq \mathcal{E}^\mathbf{u}(t) - E_S \leq \mathcal{E}^\mathbf{u}((k+1)\tau) - E_S
\end{equation}
Thus, using \eqref{eq:est-seq-bis}, we have
\begin{equation}
\begin{aligned}
\{ \mathcal{E}^\mathbf{u}(t) - E_S \}^+& \leq \exp (- \alpha (k+1)) \{ \mathcal{E}^\mathbf{u}(0) - E_S \}^+ \\
& \leq \exp \left [- \frac{\alpha}{\tau} \tau (k + 1) \right ]  \{ \mathcal{E}^\mathbf{u}(0) - E_S \}^+ \\
& \leq \exp (\alpha) \exp  \left [- \frac{\alpha}{\tau} t \right ]   \{ \mathcal{E}^\mathbf{u}(0) - E_S \}^+
\end{aligned}
\end{equation}
Writing
$\mu \triangleq \alpha / \tau \quad \mbox{and}~ M \triangleq \exp(\alpha)$, we finally obtain the desired uniform estimate:
\begin{equation}
\{ \mathcal{E}^\mathbf{u}(t) - E_S \}^+  \leq M \exp( - \mu t)  \{ \mathcal{E}^\mathbf{u}(0) - E_S \}^+,
\end{equation}
holding for all $t \geq 0$ and any solution $\mathbf{u}$.
\end{proof}

\begin{proof}[Proof of Corollary \ref{coro:attr-mono}] Under the hypotheses of Theorem \ref{th:stability-monotone}, the energy functional $\mathcal{E}$ is nonincreasing along the trajectories of the system. Hence, the set $\mathfrak{B}$ must be positively invariant. Equation \eqref{eq:attr-mono} is a direct consequence of Lemma \ref{lem:dist-E} in Appendix applied to the set $\mathfrak{B} = \mathfrak{B}_{E_S}$ along with estimate \eqref{eq:stab-monotone}.
\end{proof}

\subsection{The anti-damping case}

In this subsection, we deal with the anti-damping case, which is more interesting in terms of applications. The energy functional $\mathcal{E}$ is no longer nonincreasing along the trajectories of the system, which prevents the use of some of the arguments seen previously. However, on the basis of the same calculation, we can derive a proper Lyapunov functional and obtain exponential stability, at the cost of more restrictive assumptions on the nonlinear perturbation $F$ and the feedback function $g$. 

\begin{theo}[Stability in the anti-damping case]
\label{th:anti-damping}
Let $q \in (0, 1/2)$.
Assume that the feedback function $g$ satisfies
\begin{equation}
\label{eq:sector-g-bis}
\quad \alpha_1 |s| \leq |g(s)| \leq \alpha_2 |s| \quad \mbox{for all}~ s \in \mathbb{R},
\end{equation}
for some positive $\alpha_1 \leq \alpha_2$ verifying
\begin{equation}
\label{eq:cond-g}
\frac{\alpha_1}{1 + \alpha_2^2} > q.
\end{equation}
The following stability properties hold:
\begin{enumerate}
\item (Global version.)
\label{it:glocal}
 If $F$ is globally $q$-Lipschitz, then the closed-loop system is exponentially stable with respect to the energy $\mathcal{E}$, i.e., there exist positive constants $M$ and $\mu$ (solution independent) such that
\begin{equation}
\label{eq:exp-stab-AD}
\quad \mathcal{E}^\mathbf{u}(t) \leq M \exp(-\mu t) \mathcal{E}^\mathbf{u}(0) \quad \mbox{for all}~ t \geq 0;
\end{equation}
\item (Local version.)
\label{it:local}
 If $F$ is $q$-Lipschitz in some neighborhood $\mathcal{N}$ of $0$, there exists $R > 0$ such that \eqref{eq:exp-stab-AD} holds for all solutions $\mathbf{u}$ satisfying $\mathcal{E}^\mathbf{u}(0) \leq R$.
\end{enumerate}

\end{theo}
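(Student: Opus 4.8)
The plan is to correct the mechanical energy by the multiplier quantity already produced in Step~1 of the proof of Theorem~\ref{th:stability-monotone}, turning it into a genuine Lyapunov functional. Fixing an affine, positive, increasing weight $\rho(x) = a + bx$ with $a > b > 0$ and a small parameter $\epsilon > 0$, I would set
\begin{equation}
\Psi^\mathbf{u}(t) \triangleq \int_\Omega \rho(x)\,\partial_t u\,\partial_x u\,\mathrm{d}x, \qquad \mathcal{V}^\mathbf{u} \triangleq \mathcal{E}^\mathbf{u} + \epsilon\,\Psi^\mathbf{u}.
\end{equation}
Cauchy--Schwarz together with \eqref{eq:ener-exp} gives $|\Psi^\mathbf{u}| \le \rho(L)\,\mathcal{E}^\mathbf{u}$, so $\mathcal{V}^\mathbf{u}$ and $\mathcal{E}^\mathbf{u}$ are equivalent as soon as $\epsilon\rho(L) < 1$.

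First I would differentiate along trajectories. The energy identity \eqref{eq:energy-identity} contributes $F(\partial_t u(0,t))\,\partial_t u(0,t) - g(\partial_t u(L,t))\,\partial_t u(L,t)$, while differentiating the multiplier identity \eqref{eq:mult-id} in $\tau$ contributes the interior term $-\tfrac{b}{2}\int_\Omega\{|\partial_t u|^2 + |\partial_x u|^2\}\,\mathrm{d}x$ plus the boundary traces at $x=L$ and $x=0$. Writing the interior integral as $2\mathcal{E}^\mathbf{u} - |\partial_t u(0,t)|^2$, inserting $\partial_x u(L,t) = -g(\partial_t u(L,t))$, and invoking $|F(s)s| \le q s^2$, $g(s)s \ge \alpha_1 s^2$, $|g(s)|^2 \le \alpha_2^2 s^2$, I expect to reach, after discarding the nonpositive $|\partial_x u(0,t)|^2$ trace,
\begin{equation}
\frac{\mathrm{d}}{\mathrm{d}t}\mathcal{V}^\mathbf{u} \le -\epsilon b\,\mathcal{E}^\mathbf{u} + \Big(q - \epsilon\tfrac{a-b}{2}\Big)|\partial_t u(0,t)|^2 + \Big(\epsilon\tfrac{\rho(L)(1+\alpha_2^2)}{2} - \alpha_1\Big)|\partial_t u(L,t)|^2.
\end{equation}

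The hard part is the parameter bookkeeping, which is precisely where the two standing hypotheses are consumed. Making both bracketed coefficients nonpositive requires $\tfrac{2q}{a-b} \le \epsilon \le \tfrac{2\alpha_1}{\rho(L)(1+\alpha_2^2)}$; this interval is nonempty iff $q(a+bL)(1+\alpha_2^2) \le \alpha_1(a-b)$, whose $b\to 0^+$ limit is exactly \eqref{eq:cond-g}. The equivalence constraint $\epsilon < 1/\rho(L)$ is compatible with the lower bound $\epsilon \ge 2q/(a-b)$ iff $2q(a+bL) < a-b$, whose $b\to 0^+$ limit is $q < 1/2$. Both limiting inequalities being strict and matching the assumptions on $q$, a small enough $b$ and an admissible $\epsilon$ exist, leaving $\tfrac{\mathrm{d}}{\mathrm{d}t}\mathcal{V}^\mathbf{u} \le -\epsilon b\,\mathcal{E}^\mathbf{u} \le -\mu\,\mathcal{V}^\mathbf{u}$ with $\mu \triangleq \epsilon b/(1+\epsilon\rho(L))$. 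Grönwall's lemma and the equivalence then yield \eqref{eq:exp-stab-AD}, proving item~\ref{it:glocal}.

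For item~\ref{it:local}, the bound $|F(s)s| \le q s^2$ is available only while $\partial_t u(0,t)$ lies in $\mathcal{N}$, so I would close the argument by continuation. Since $|\partial_t u(0,t)|^2 \le 2\mathcal{E}^\mathbf{u}(t)$ and $\partial_t u(0,\cdot)$ is continuous, let $T^\ast$ be maximal such that $\partial_t u(0,s) \in \mathcal{N}$ for all $s < T^\ast$; on $[0,T^\ast)$ the global computation applies verbatim and forces $\mathcal{E}^\mathbf{u}(t) \le M\,\mathcal{E}^\mathbf{u}(0)$. As $\mathcal{N}$ contains some interval $(-\delta,\delta)$, choosing $R$ with $2MR < \delta^2$ keeps $|\partial_t u(0,t)| < \delta$ throughout $[0,T^\ast)$, whence $T^\ast = +\infty$ by continuity and the decay is global in time. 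The only genuine subtlety I foresee is confirming that a single $\epsilon$ can simultaneously tame the anti-damping at $x=0$ and the multiplier surplus at $x=L$ without breaking $\mathcal{V}^\mathbf{u} \sim \mathcal{E}^\mathbf{u}$; that the two limiting requirements reproduce \eqref{eq:cond-g} and $q<1/2$ verbatim is a strong indication that no slack is lost.
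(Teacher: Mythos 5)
Your proposal is correct, and its core coincides with the paper's own proof of Theorem~\ref{th:anti-damping}: your functional $\mathcal{V}^\mathbf{u} = \mathcal{E}^\mathbf{u} + \epsilon\int_\Omega \rho\,\partial_t u\,\partial_x u\,\mathrm{d}x$ is exactly the paper's Lyapunov functional $\Gamma_\rho$ with the weight $\epsilon\rho$ in place of $\rho$; the paper likewise obtains the dissipation inequality by summing the energy identity \eqref{eq:energy-identity} with one half of the multiplier identity \eqref{eq:mult-id}, and it fixes the affine weight explicitly via $\rho(0)=2q+\epsilon$, $\rho(L)=2q+2\epsilon$, $\rho'=\epsilon/L$ --- one concrete point of your admissible $(a,b,\epsilon)$ region, with your limiting inequalities as $b\to 0^+$ (recovering \eqref{eq:cond-g} and $q<1/2$) playing the role of the paper's smallness conditions $(q+\epsilon)(1+\alpha_2^2)\le\alpha_1-\epsilon$ and $q+\epsilon\le\tfrac{1}{2}(1-\epsilon)$. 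The only genuine divergence is the local case: you close it by a continuation (maximal-interval) argument based on the pointwise bound $|\partial_t u(0,t)|^2\le 2\,\mathcal{E}^\mathbf{u}(t)$ from \eqref{eq:ener-exp}, whereas the paper iterates the decay over windows $[k\tau,(k+1)\tau]$ using the a priori estimate \eqref{eq:apriori} with its horizon-dependent constant $C(\tau)$. Both arguments rest on the same structural fact --- the velocity at $x=0$ is a component of the state, hence pointwise controlled by the energy --- and your version is slightly more economical since it bypasses $C(\tau)$ altogether; just note that $M>1$ in your scheme is what guarantees $\partial_t u(0,0)\in\mathcal{N}$ so that $T^\ast>0$, that the sector bound $g(s)s\ge\alpha_1 s^2$ uses the nondecreasingness of $g$ (as the paper does in \eqref{eq:g-alpha1}), and that for weak solutions the pointwise derivative of $\mathcal{V}^\mathbf{u}$ should be read in integrated form before applying Gr\"onwall, which is justified exactly as in the paper by strong solutions plus density.
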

In particular, if $g = \mathrm{id}$ (proportional controller with unitary gain),  then \eqref{eq:cond-g} holds for any Lipschitz constant $q < 1/2$. Using the terminology introduced in \cite{campbell_singular_1979}, we state the counterpart of Corollary \ref{coro:attr-mono}.

\begin{corollary}
\label{coro:attr-damp}
Under the hypotheses of Theorem \ref{th:anti-damping}, assuming that $F$ is globally $q$-Lipschitz, the set
\begin{equation}
\mathfrak{A} \triangleq \{ [\mathbf{u}, \mathbf{v}] \in \mathcal{H} : \mathcal{E}(\mathbf{u}, \mathbf{v}) = 0 \},
\end{equation}
which is exactly the set of stationnary solutions, is \emph{pointwise asymptotically stable}, i.e.,
\begin{itemize}
\item Each point $\mathbf{X}$ in $\mathfrak{A}$ is Lyapunov stable;
\item Every solution $[\mathbf{u}, \mathbf{u}']$ converges in $\mathcal{H}$ to some limit in $\mathfrak{A}$.
\end{itemize}
Furthermore, the following uniform attractivity property holds:
\begin{equation}
\label{eq:attr-A}
\operatorname{dist}([\mathbf{u}(t), \mathbf{u}'(t)], \mathfrak{A})^2 \leq M' \exp( - \mu t) \mathcal{E}^\mathbf{u}(0),
\end{equation}
when $\mu$ is the decay rate given in Theorem \ref{th:anti-damping} and $M'$ is a positive constant.
 If $F$ is $q$-Lipschitz in some neighborhood of $0$, the same conclusions remain true for all solutions $\mathbf{u}$ satisfying $\mathcal{E}^\mathbf{u}(0) \leq R$.
\end{corollary}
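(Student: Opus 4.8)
The plan is to deduce everything from the exponential energy decay \eqref{eq:exp-stab-AD} of Theorem \ref{th:anti-damping}, after first recording the structure of $\mathfrak{A}$. Since $\mathcal{E}(\mathbf{u}, \mathbf{v}) = \tfrac{1}{2}\{\int_\Omega |\partial_x u|^2\,\mathrm{d}x + \|\mathbf{v}\|_H^2\}$, the condition $\mathcal{E} = 0$ forces $\mathbf{v} = 0$ and $\partial_x u \equiv 0$, i.e. $u$ equals a constant $c$ and $\mathbf{u} = (c, c)$; inspecting $\mathcal{A}_g$ and $\mathcal{F}$ (and using $F(0) = g(0) = 0$) confirms these are exactly the equilibria of \eqref{eq:cauchy-pb}. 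The uniform attractivity estimate \eqref{eq:attr-A} is then immediate: applying Lemma \ref{lem:dist-E} to $\mathfrak{A}$, viewed as the sublevel set $\{\mathcal{E} \leq 0\}$, gives $\operatorname{dist}([\mathbf{u}(t),\mathbf{u}'(t)], \mathfrak{A})^2 \leq C\,\mathcal{E}^\mathbf{u}(t)$, and inserting \eqref{eq:exp-stab-AD} yields \eqref{eq:attr-A} with $M' = CM$. (Equivalently, one bounds $\operatorname{dist}([\mathbf{u},\mathbf{u}'],\mathfrak{A})^2 = \inf_c \|u-c\|_{H^1}^2 + \|\mathbf{u}'\|_H^2$ directly through the Poincar\'e--Wirtinger inequality, by which $\inf_c\|u-c\|_{L^2}^2 \leq C_P\|\partial_x u\|_{L^2}^2$.)

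To upgrade attractivity of the set to convergence of each trajectory to a \emph{single} point of $\mathfrak{A}$, I would exploit that \eqref{eq:exp-stab-AD} makes the velocity integrable in time. Indeed $\|\mathbf{u}'(t)\|_H \leq \sqrt{2\mathcal{E}^\mathbf{u}(t)} \leq \sqrt{2M\mathcal{E}^\mathbf{u}(0)}\,\exp(-\mu t/2)$, so $\mathbf{u}' \in L^1(\mathbb{R}^+, H)$. Since $\mathbf{u} \in \mathcal{C}^1(\mathbb{R}^+, H)$ with $\mathbf{u}(t) - \mathbf{u}(s) = \int_s^t \mathbf{u}'(\sigma)\,\mathrm{d}\sigma$, the map $t \mapsto \mathbf{u}(t)$ is Cauchy in $H$ and converges to some $\mathbf{u}_\infty$. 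Because $\partial_x u(t) \to 0$ in $L^2(\Omega)$ by energy decay, this convergence upgrades to $V$, whence $\mathbf{u}_\infty = (c_\infty, c_\infty)$ with $c_\infty$ constant; together with $\mathbf{u}'(t) \to 0$ in $H$ this gives $[\mathbf{u}(t),\mathbf{u}'(t)] \to [\mathbf{u}_\infty, 0] \in \mathfrak{A}$.

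For Lyapunov stability of an arbitrary $\mathbf{X}_\ast = [(c,c),(0,0)] \in \mathfrak{A}$, I would first use the translation invariance of the dynamics: the affine map $\sigma_c : [\mathbf{u},\mathbf{v}] \mapsto [(u + c,\, u(0) + c),\, \mathbf{v}]$ is an isometry of $\mathcal{H}$, leaves $\mathcal{E}$ unchanged, commutes with the semigroup (as $F$, $g$ depend only on velocities), and sends the origin to $\mathbf{X}_\ast$; hence it suffices to prove stability of $\mathbf{0}$. The difficulty — and the step I expect to be the main obstacle — is that $\mathcal{E}$ controls $\partial_x u$ and the velocities but \emph{not} the $L^2$-size of $u$ itself, i.e. the zero-frequency mode $\bar{u}(t) = \tfrac{1}{L}\int_\Omega u\,\mathrm{d}x$ is invisible to the energy. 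I would control it through $|\dot{\bar{u}}(t)| \leq L^{-1/2}\|\partial_t u(t)\|_{L^2} \leq L^{-1/2}\sqrt{2\mathcal{E}^\mathbf{u}(t)}$, whose time-integral is finite by \eqref{eq:exp-stab-AD}, giving $|\bar{u}(t)| \leq |\bar{u}(0)| + C\mu^{-1}\sqrt{\mathcal{E}^\mathbf{u}(0)} \leq C'\|\mathbf{X}^0\|_{\mathcal{H}}$. Combining this with Poincar\'e--Wirtinger and $\mathcal{E}^\mathbf{u}(t) \leq M\mathcal{E}^\mathbf{u}(0)$ yields a uniform linear bound $\|\mathcal{S}_t\mathbf{X}^0\|_{\mathcal{H}} \leq K\|\mathbf{X}^0\|_{\mathcal{H}}$, whence Lyapunov stability of $\mathbf{0}$ (take $\delta = \varepsilon/K$) and, after transport by $\sigma_c$, of every point of $\mathfrak{A}$.

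Finally, the local statement follows verbatim once one restricts to solutions with $\mathcal{E}^\mathbf{u}(0) \leq R$: the decay \eqref{eq:exp-stab-AD} holds on this set, $\mathcal{E}$ is shift-invariant so $\sigma_c$ preserves the smallness condition, and the three preceding steps use only \eqref{eq:exp-stab-AD}. It is worth emphasizing that the mean-mode control is precisely the point where the \emph{exponential} (rather than merely qualitative) decay of $\mathcal{E}$ is genuinely needed: both the selection of the limit point and the Lyapunov bound rest on the integrability of the velocity, not on the energy bound alone.
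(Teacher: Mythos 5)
Your proposal is correct, and two of its three parts coincide with the paper's own proof: the uniform attractivity estimate \eqref{eq:attr-A} is obtained exactly as in the paper, by applying Lemma \ref{lem:dist-E} with $E = 0$ to the decay \eqref{eq:exp-stab-AD}, and the convergence of each trajectory to a single point of $\mathfrak{A}$ rests on the same mechanism used there, namely that \eqref{eq:exp-stab-AD} makes $\|\mathbf{u}'(\cdot)\|_H$ integrable so that $t \mapsto \mathbf{u}(t)$ is Cauchy in $H$; the paper phrases this with sequences $t_n \to +\infty$ and identifies the limit in $V$ by uniqueness of limits, while your upgrade to $V$-convergence via $\partial_x u(t) \to 0$ in $L^2(\Omega)$ is the same identification made explicit. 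Where you genuinely diverge is the Lyapunov stability of points of $\mathfrak{A}$. The paper argues directly at an arbitrary $\mathbf{Y} = [\mathbf{w}, 0]$: it bounds $\|\mathbf{u}(t) - \mathbf{u}^0\|_H$ by integrating the exponentially decaying velocity, expands $\|\mathbf{X}(t) - \mathbf{Y}\|^2_\mathcal{H}$ using $\partial_x w = 0$, and closes with the continuity of $\mathcal{E}$ at $\mathbf{Y}$, where $\mathcal{E}(\mathbf{Y}) = 0$; the resulting $\delta$ a priori depends on $\mathbf{Y}$. You instead use the equivariance $\sigma_c \circ \mathcal{S}_t = \mathcal{S}_t \circ \sigma_c$ to reduce to the origin and control the zero-frequency mode $\bar{u}(t)$ through $|\dot{\bar{u}}(t)| \leq L^{-1/2} \|\partial_t u(t)\|_{L^2(\Omega)}$; note that your mean-mode estimate is essentially the paper's bound on $\|\mathbf{u}(t) - \mathbf{u}^0\|_H$ in disguise, since $\bar{u}(t) - \bar{u}(0)$ is a component of that difference and Poincar\'e--Wirtinger recovers the remaining part from the energy. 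What your route buys is a uniform linear gain $\|\mathcal{S}_t \mathbf{X}^0 - \mathbf{X}_\ast\|_\mathcal{H} \leq K \|\mathbf{X}^0 - \mathbf{X}_\ast\|_\mathcal{H}$ and hence a stability modulus $\delta = \varepsilon / K$ independent of the equilibrium, which is slightly stronger than the pointwise statement proved in the paper. The price is the commutation claim itself: it is true, because $\mathcal{A}_g$, its domain, and $\mathcal{F}$ depend only on $\partial_x u$, $\partial_{xx} u$ and the velocity components, so $\sigma_c$ maps strong solutions to strong solutions, and the property passes to weak solutions by density of $\mathcal{D}(\mathcal{A}_g)$ together with the uniqueness recalled after Theorem \ref{th:had-wp} -- but as written it is the one step you assert without justification, and it deserves that explicit one-line argument. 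Your treatment of the local case (restricting to $\mathcal{E}^\mathbf{u}(0) \leq R$, which is preserved by $\sigma_c$ since $\mathcal{E}$ is shift-invariant) matches the scope of the corollary's final assertion.
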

\begin{rem}
Proof of Theorem \ref{th:anti-damping} provides an alternative method to obtain the special case of Theorem \ref{th:stability-monotone} where the constant $S$ can be taken as $0$.
\end{rem}

\begin{proof}[Proof of Theorem \ref{th:anti-damping}]
Let $\rho \in H^1(\Omega)$. We define the functional $\Gamma_\rho$ on the phase space $\mathcal{H}$ as follows:
\begin{equation}
\Gamma_\rho(\mathbf{u}, \mathbf{v}) \triangleq \mathcal{E}(\mathbf{u}, \mathbf{v}) + \int_\Omega \rho(x) \partial_x u(x) v(x) \, \mathrm{d}x.
\end{equation}
Note that $\Gamma_\rho$ is continuous on $\mathcal{H}$. Besides, if for some $\epsilon > 0$, $|\rho(x)|\leq 1 - \epsilon$ for all a.e. $x \in \Omega$, then there exist two positive constants $M_1$ and $M_2$ such that for all $[\mathbf{u}, \mathbf{v}] \in \mathcal{H}$,
\begin{equation}
\label{eq:equi-lyap}
M_1 \mathcal{E}(\mathbf{u}, \mathbf{v}) \leq \Gamma_\rho (\mathbf{u}, \mathbf{v}) \leq M_2 \mathcal{E}(\mathbf{u}, \mathbf{v}).
\end{equation}
If $\mathbf{u}$ is a given solution to \eqref{eq:pde-bc}, then we also denote by $\Gamma_\rho^\mathbf{u}$ the (continuous) function defined on $\mathbb{R}^+$ by
\begin{equation}
\Gamma_\rho^\mathbf{u}(t) \triangleq \Gamma_\rho(\mathbf{u}(t), \mathbf{u}'(t)).
\end{equation}
Then, we have
\begin{equation}
\begin{aligned}
\Gamma_\rho^\mathbf{u}(t) = \frac{1}{2} \int_\Omega |\partial_x u| + |\partial_t u|^2 + 2\rho(x) \partial_t u \partial_x u \, \mathrm{d}x \\ +\frac{1}{2} |\partial_t u(0, t)|^2.
\end{aligned}
\end{equation}

Let us write the variation of $\Gamma_\rho$ along the trajectories. For any solution $\mathbf{u}$ to \eqref{eq:pde-bc} and $\tau \geq 0$,
\begin{equation}
\label{eq:var-gamma}
\begin{aligned}
\Gamma_\rho^\mathbf{u}(t) \bigg |_0^\tau =
 - \frac{1}{2} \iint_{Q_\tau}  \rho'(x) \left \{ |\partial_t u|^2 +| \partial_x u|^2 \right \} \mathrm{d}x \, \mathrm{d}t \\
+ \int_0^\tau \left \{ F(\partial_t u(0, t)) \partial_t u (0, t) -  g(\partial_t u(L, t)) \partial_t u(L, t) \right \} \mathrm{d}t \\
+ \frac{1}{2} \int_0^\tau  \left [ \rho(x)  \left \{ |\partial_t u|^2 +| \partial_x u|^2 \right \} \right ]_0^L \mathrm{d}t.
\end{aligned}
\end{equation}
Equation \eqref{eq:var-gamma} is directly obtained summing the energy identity \eqref{eq:energy-identity} and one half of the multiplier identity \eqref{eq:mult-id}.

Just as in the proof of Theorem \ref{th:stability-monotone}, we take an affine, positive and increasing weight $\rho$. Then, \eqref{eq:var-gamma} implies
\begin{equation}
\label{eq:ineq-var-gamma}
\begin{aligned}
\Gamma_\rho^\mathbf{u}(t) \bigg |_0^\tau \leq
 - \frac{1}{2} \iint_{Q_\tau}  \rho' \left \{ |\partial_t u|^2 +| \partial_x u|^2 \right \} \mathrm{d}x \, \mathrm{d}t \\
+\int_0^\tau \left \{ F(\partial_t u(0, t)) - \frac{\rho(0)}{2}  \partial_t u(0, t)\right \} \partial_t u(0, t) \, \mathrm{d}t \\
+\frac{\rho(L)}{2}  \int_0^\tau \left \{ |\partial_t u(L, t)|^2 + |\partial_x u (L, t)| ^2 \right \} \mathrm{d}t \\
 - \int_0^\tau  g(\partial_t u(L, t) \partial_t u(L, t) \, \mathrm{d}t.
\end{aligned}
\end{equation}

\textbf{Global case.} Let us start with the global case: since $F$ satisfies $F(0) = 0$ and $F$ is $q$-Lipschitz continuous, we have
\begin{equation}
\label{eq:F-lips}
F(s)s \leq q |s|^2 \quad \mbox{for all}~ s \in \mathbb{R}.
\end{equation}
Let $\epsilon >0$ be a sufficiently small parameter to be chosen later. Then, if $\rho(0) \geq 2q + \epsilon$, using \eqref{eq:F-lips}, we see that
\begin{equation}
\label{eq:gamma-0}
\begin{aligned}
\int_0^\tau \left \{ F(\partial_t u(0, t)) - \frac{\rho(0)}{2}  \partial_t u(0, t)\right \} \partial_t u(0, t) \, \mathrm{d}t\\  \leq - \epsilon \int_0^\tau |\partial_t u(0, t)|^2 \, \mathrm{d}t.
\end{aligned}
\end{equation}
In \eqref{eq:ineq-var-gamma}, replacing $\partial_x u(L, t)$ with the boundary condition, we obtain
\begin{equation}
\label{eq:est-x-L}
\begin{aligned}
\frac{\rho(L)}{2} \int_0^\tau  \left \{ |\partial_t u(L, t)|^2 + |\partial_x u (L, t)| ^2 \right \} \mathrm{d}t
\\ = \frac{\rho(L)}{2} \int_0^\tau \left \{ |\partial_t u(L, t)|^2 + |g(\partial_t u (L, t)| ^2 \right \} \mathrm{d}t
\\ \leq \frac{\rho(L)}{2} (1 + \alpha_2^2) \int_0^\tau |\partial_t u(L, t)|^2 \, \mathrm{d}t,
\end{aligned}
\end{equation}
where we also used \eqref{eq:sector-g-bis}. Furthermore, by nondecreasingness of $g$, we have
\begin{equation}
\label{eq:g-alpha1}
\begin{aligned}
&- \int_0^\tau g(\partial_t u(L, t)) \partial_t u(L, t) \, \mathrm{d}t \\& = - \int |g(\partial_t u(L, t)| |\partial_t u(L, t)| \, \mathrm{d}t  \leq - \alpha \int_0^\tau |\partial_t u(L, t)|^2 \, \mathrm{d}t.
\end{aligned}
\end{equation}
 Then, combining \eqref{eq:est-x-L} and \eqref{eq:g-alpha1} yields
\begin{equation}
\begin{aligned}
\frac{\rho(L)}{2} \int_0^\tau  \left \{ |\partial_t u(L, t)|^2 + |\partial_x u (L, t)| ^2 \right \} \mathrm{d}t
 \\ - \int_0^\tau g(\partial_t u(L, t)) \partial_t u(L, t) \, \mathrm{d}t \\
\leq \left \{ \frac{\rho(L)}{2} (1 + \alpha_2^2) - \alpha_1 \right \} \int_0^\tau |\partial_t u(L, t)|^2 \, \mathrm{d}t.
\end{aligned}
\end{equation}
Choosing $\epsilon$ sufficiently small so that
\begin{equation}
(q + \epsilon)(1 + \alpha_2^2) \leq \alpha_1 - \epsilon, \quad \mbox{and}~ q + \epsilon \leq \frac{1}{2}(1 - \epsilon),
\end{equation}
which is possible by \eqref{eq:cond-g} along with the condition $q < 1/2$,
we observe that, if we take
\begin{equation} \rho(L) = 2q + 2\epsilon,
\end{equation}
then, on the one hand,
\begin{equation}
\label{eq:gamma-L}
\begin{aligned}
\frac{\rho(L)}{2} \int_0^\tau  \left \{ |\partial_t u(L, t)|^2 + |\partial_x u (L, t)| ^2 \right \} \mathrm{d}t
 \\ - \int_0^\tau g(\partial_t u(L, t)) \partial_t u(L, t) \, \mathrm{d}t \\
\leq - \epsilon \int_0^\tau |\partial_t u(L, t)|^2 \, \mathrm{d}t;
\end{aligned}
\end{equation}
and, on the other hand,
\begin{equation}
\rho(L) \leq 1 - \epsilon.
\end{equation}
Now, if we choose
\begin{equation}
\rho(0) = 2q + \epsilon,
\end{equation}
then, \eqref{eq:gamma-0} holds and the affine function $\rho$ uniquely defined by the choice of $\rho(0)$ and $\rho(L)$ is indeed increasing, with $\rho' = \epsilon / L$. Besides, $0 \leq \rho(x) \leq 1-\epsilon$ for all $x \in \Omega$, which guarantees that \eqref{eq:equi-lyap} holds.

With this particular choice of weight $\rho$, combining \eqref{eq:ineq-var-gamma}, \eqref{eq:gamma-0} and \eqref{eq:gamma-L}, we obtain
\begin{equation}
\Gamma^\mathbf{u}_\rho(t) \bigg |_0^\tau \leq - \mu \int_0^\tau \Gamma_\rho^\mathbf{u}(t) \, \mathrm{d}t,
\end{equation}
holding for any solution $\mathbf{u}$ and all $\tau \geq 0$, where
\begin{equation}
\mu \triangleq \min \left \{ \epsilon, \frac{\epsilon}{2L} \right \} > 0.
\end{equation}
Since $\Gamma^\mathbf{u}_\rho$ is a continuous function, an application of Gr\"onwall's lemma yields the desired estimate \eqref{eq:exp-stab-AD}.

\textbf{Local case.} Let $\tau > 0$. Recall from \eqref{eq:apriori} the \emph{a priori} estimate
\begin{equation}
\label{eq:apriori-0}
\sup_{t \in [0, {\tau}]} |\partial_t u(0, t)|^2 \leq C(\tau) \mathcal{E}^\mathbf{u}(0),
\end{equation}
where $C(\tau) > 0$ does not depend on the solution. On the other hand, we may also consider the functional $\Gamma^\mathbf{u}_\rho$, where $\rho$ is chosen as in the global case. From \eqref{eq:equi-lyap}, we have
\begin{equation}
\label{eq:a-priori-0}
\sup_{t \in [0, \tau]} |\partial_t u(0, t)|^2 \leq M_1^{-1} C(\tau) \Gamma_\rho^\mathbf{u}(0).
\end{equation}
 We infer from \eqref{eq:a-priori-0} that we can choose $R(\tau) > 0$ such that, if $\Gamma^\mathbf{u}_\rho(0) \leq R(\tau)$, then
\begin{equation}
\partial_t u(0, t) \in \mathcal{N} \quad \mbox{for all}~ t \in [0, \tau].
\end{equation}
Thus, {as in the global case}, we obtain
\begin{equation}
\Gamma_\rho^\mathbf{u}(t) \leq \Gamma^\mathbf{u}(0)\exp(-\mu t)  \quad \mbox{for all}~ t \in [0, \tau].
\end{equation}
Iterating, since for each $k \geq 1$, by \eqref{eq:apriori-0},
\begin{equation}
\begin{aligned}
\sup_{t \in [k\tau, (k+1)\tau]} |\partial_t u(0, t)|^2 &\leq M_1^{-1} C(\tau) \exp(-k\mu \tau) \Gamma_\rho^\mathbf{u}(0)
\\ &\leq R(\tau),
\end{aligned}
\end{equation}
then
\begin{equation}
\Gamma^\mathbf{u}(t) \leq \exp(-\mu t) \Gamma^\mathbf{u}(k\tau) \quad \mbox{for all}~ t \in [k\tau, (k+1)\tau].
\end{equation}
The result is now proved, with, say,
\begin{equation}
R \triangleq M_2^{-1} R(\tau).
\end{equation}
\end{proof}
{
\begin{rem}
In the proof of Theorem \ref{th:anti-damping}, deducing the local result from the global case is straightforward because of the uniform, pointwise estimate \eqref{eq:apriori-0}. This is a consequence of the velocity at $x = 0$ being a part of the state space due to the second-order boundary dynamic. However, the velocity term $\partial_t u(L, t)$ at the other endpoint can only be estimated in $L^2(0, \tau)$.

\end{rem}
}

\begin{proof}[Proof of Corollary \ref{coro:attr-damp}]
 We start with \eqref{eq:attr-A}, which is again a consequence of \eqref{eq:exp-stab-AD} and Lemma \ref{lem:dist-E} applied to the set $\mathfrak{A} = \mathfrak{B}_0$. Now, let us prove the pointwise asymptotic stability of $\mathfrak{A}$.

First, we show that each point is Lyaponov stable. Let $\mathbf{Y} = [\mathbf{w}, 0] \in \mathfrak{A}$ and $\epsilon > 0$. We must find $\delta > 0$ such that any trajectory originating from a point $\mathbf{X}^0 \in \mathcal{H}$ satisfying $\|\mathbf{X}^0 - \mathbf{Y}\|^2_\mathcal{H} \leq \delta$ must then remain in the ball $\mathcal{B}(\mathbf{Y}, \epsilon)$. Let $\mathbf{X}^0 \in \mathcal{H}$; we write $\mathbf{X}(t) = [\mathbf{u}(t), \mathbf{u}'(t)] = \mathcal{S}_t \mathbf{X}^0$. First, we have the following $H$-valued integral:
$
\mathbf{u}(t) = \mathbf{u}^0 + \int_0^t \mathbf{u}'(s) \, \mathrm{d}s$ for all $ t\geq 0
$.
Thus,
\begin{equation}
\label{eq:est-int}
\| \mathbf{u}(t) - \mathbf{u}^0\|_H \leq \int_0^\infty \| \mathbf{u}'(s) \|_H \, \mathrm{d}s \quad \mbox{for all}~ t \geq 0.
\end{equation}
The right-hand side of \eqref{eq:est-int} is finite because of the estimate
\begin{equation}
\label{eq:est-derivative}
\|\mathbf{u}'(t)\|_H \leq (2 M)^{1/2}\exp\left ( - \frac{\mu t}{2}\right ) \mathcal{E}(\mathbf{u}^0, \mathbf{v}^0)^{1/2},
\end{equation}
which comes from \eqref{eq:exp-stab-AD}. In fact, combining \eqref{eq:est-int} and \eqref{eq:est-derivative}, we obtain
\begin{equation}
\label{eq:dist-u-u0-H}
\begin{aligned}
\| \mathbf{u}(t) - \mathbf{u}^0 \|_H &\leq C \mathcal{E}(\mathbf{u}^0, \mathbf{v}^0)^{1/2} \int_0^{+\infty} \exp \left ( - \frac{\mu}{2} s \right) \, \mathrm{d}s
\\ &\leq C' \mathcal{E}(\mathbf{u}^0, \mathbf{v}^0).
\end{aligned}
\end{equation}
On the other hand, we can write
\begin{equation}
\begin{aligned}
\| \mathbf{X}(t) - \mathbf{Y}\|^2_\mathcal{H} &= \| \mathbf{u}'(t) \|^2_H + a( \mathbf{u}(t) - \mathbf{w}, \mathbf{u}(t) - \mathbf{w})
\\ &+ \| \mathbf{u}(t) - \mathbf{w} \|^2_H
\end{aligned}
\end{equation}
Since $\mathbf{Y} \in \mathfrak{A}$, $\partial_x w = 0$, and we have
\begin{equation}
\label{eq:lyap-stab-X}
\begin{aligned}
\| \mathbf{X}(t) - \mathbf{Y}\|^2_\mathcal{H} &= \| \mathbf{u}'(t) \|^2_H + a( \mathbf{u}(t), \mathbf{u}(t))
+ \| \mathbf{u}(t) - \mathbf{w} \|^2_H \\
&\leq 2 \mathcal{E}^\mathbf{u}(t) + 2 \|\mathbf{w} - \mathbf{u}^0\|^2_H + 2\|\mathbf{u}^0 - \mathbf{u}(t)\|^2_H.
\end{aligned}
\end{equation}
Plugging \eqref{eq:exp-stab-AD} and \eqref{eq:dist-u-u0-H} into \eqref{eq:lyap-stab-X}, we obtain
\begin{equation}
\| \mathbf{X}(t) - \mathbf{Y}\|^2_\mathcal{H} \leq 2\{M + (C')^2\} \mathcal{E}(\mathbf{X}^0) + 2\|\mathbf{u}^0 - \mathbf{w}\|^2_H
\end{equation}
Since the energy functional $\mathcal{E}$ is continuous on $\mathcal{H}$ and $\mathcal{E}(\mathbf{Y}) = 0$, there exists $\eta> 0$ depending only on $\mathbf{Y}$ such that if $\| \mathbf{X}^0 - \mathbf{Y}\|^2_\mathcal{H} \leq \eta$, then $\mathcal{E}(\mathbf{X}^0) \leq \epsilon$. Choosing $\delta > 0$ such that
$
\epsilon \leq \min (\eta, \epsilon)
$,
we see that $\|\mathbf{Y} - \mathbf{X}^0\|^2_\mathcal{H} \leq \delta$ implies
\begin{equation}
\|\mathbf{X}(t) - \mathbf{Y}\|^2_\mathcal{H} \leq 2\{M +(C')^2 + 1\} \epsilon \quad \mbox{for all}~ t \geq 0,
\end{equation}
which is the desired result.

Let us now prove that each trajectory $[\mathbf{u}, \mathbf{u}']$ converges (with respect to the strong topology of $\mathcal{H}$) to some limit in $\mathfrak{A}$. We already know that $\mathbf{u}'(t) \to 0$ in $H$ and $a(\mathbf{u}(t), \mathbf{u}(t)) \to 0$ as $t \to + \infty$. Pick an increasing sequence of nonnegative real numbers $t_n$ such that $t_n \to + \infty$. Then, the following estimate, which is obtained just as \eqref{eq:est-int}, holds:
\begin{equation}
\label{eq:var-seq}
\begin{aligned}
\|\mathbf{u}(t_m) - \mathbf{u}(t_n) \|_H &\leq \int_{t_n}^{+\infty} \|\mathbf{u}'(s)\|_H \, \mathrm{d}s \quad \mbox{for all}~ m \geq n.
\end{aligned}
\end{equation}
We infer from \eqref{eq:var-seq} that $\{ \mathbf{u}(t_n) \}$ is a Cauchy sequence in $H$ and thus converges to some $\mathbf{u}_\infty$. A similar argument allows us to see that $\mathbf{u}_\infty$ does not depend on the choice of the sequence $\{ t_n \}$. Then, by unicity of the limit, $\mathbf{u}_\infty$ belongs in fact to $V$ and satisfies $a(\mathbf{u}_\infty, \mathbf{u}_\infty) = 0$. Thus, $[\mathbf{u}(t), \mathbf{u}'(t)]$ converges in $\mathcal{H}$ to $[\mathbf{u}_\infty, 0] \in \mathfrak{A}$ as $t$ goes to $+ \infty$.
\end{proof}

\section{Conclusion}
\label{sec:conc}
In this paper, asymptotic stability of a one-dimensional wave equation with nonlinear boundary conditions at both endpoints of the domain has been studied. The considerered boundary conditions consist of a coupled ordinary differential equation with a nonlinear first-order term at one endpoint, and a Neumann-type nonlinear boundary dissipation at the other endpoint. When the nonlinear term $F$ does not induce an increase  of the mechanical energy and the nonlinearity $g$ can be asymptotically  lower-bounded and upper-bounded by some linear functions, it has been proved via a multiplier analysis that solutions exponentially converge to a sublevel set of the energy functional. When no sign condition is prescribed on the nonlinear function $F$, a Lyapunov-based analysis has shown that solutions converge exponentially to the set of stationary points, provided that its Lipschitz constant is small and the nonlinearity $g$ satisfies a more restrictive sector-like condition.

An interesting future outlook concerns the sharpness of the limit Lipschitz parameter $q$ obtained in Theorem \ref{th:anti-damping}. Also, we note that the class of admissible functions $g$ for the well-posedness theorem includes saturation maps; performing a stability analysis in the case of saturating feedback is a natural research line as well.

\bibliographystyle{plain}
\bibliography{wave-equation-final.bib}

\appendix
\label{ap:prf}

The following lemma allows us to estimate the distance to sublevel
 sets of the energy functional $\mathcal{E}$. It is based on the Poincar\'e-Wirtinger inequality.
\begin{lemma}
\label{lem:dist-E}
Let $E \geq 0$ and
\begin{equation}
\mathfrak{B}_E \triangleq \left \{ [\mathbf{u}, \mathbf{v}] \in \mathcal{H}: \mathcal{E}(\mathbf{u}, \mathbf{v}) \leq E \right \}.
\end{equation}
Then, there exists $K > 0$ such that, for all $[\mathbf{u}, \mathbf{v}] \in \mathcal{H}$,
\begin{equation}
\operatorname{dist} ([\mathbf{u}, \mathbf{v}], \mathfrak{B}_E)^2 \leq K \{ \mathcal{E}(\mathbf{u}, \mathbf{v}) - E \}^+.
\end{equation}
\end{lemma}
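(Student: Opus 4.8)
The plan is to exploit a structural asymmetry between the energy $\mathcal{E}$ and the distance in $\mathcal{H}$: the energy only involves the gradient $\partial_x u$ of the first component (through $a(\mathbf{u},\mathbf{u})$) together with $\mathbf{v}$, and is therefore blind to the constant part of $u$, whereas $\operatorname{dist}(\cdot, \mathfrak{B}_E)$ controls the full $V$-norm of $\mathbf{u}$, which includes $\|u\|_{L^2(\Omega)}$ and $u(0)$. I would construct an explicit competitor in $\mathfrak{B}_E$ by contracting \emph{only} the energy-carrying modes toward the energy-free constant mode, and then bound the displacement via Poincar\'e--Wirtinger. Note first that $\mathfrak{B}_E$ is nonempty (it contains the origin), so the distance is finite, and that if $\mathcal{E}(\mathbf{u},\mathbf{v}) \leq E$ the left-hand side vanishes and there is nothing to prove. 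Hence assume $\mathcal{E}_0 \triangleq \mathcal{E}(\mathbf{u},\mathbf{v}) > E$.

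I would write $\bar u \triangleq \frac{1}{L}\int_\Omega u \, \mathrm{d}x$ and $\tilde u \triangleq u - \bar u$, so that $\partial_x \tilde u = \partial_x u$ and $\int_\Omega \tilde u \, \mathrm{d}x = 0$. Setting $\lambda \triangleq (E/\mathcal{E}_0)^{1/2} \in [0, 1)$, I define the candidate $[\mathbf{w}, \mathbf{z}] \in \mathcal{H}$ through $w \triangleq \bar u + \lambda \tilde u$ (with $\mathbf{w} = (w, w(0))$) and $\mathbf{z} \triangleq \lambda \mathbf{v}$. Because $\partial_x w = \lambda \partial_x u$, one computes $\mathcal{E}(\mathbf{w}, \mathbf{z}) = \lambda^2 \mathcal{E}_0 = E$, so that $[\mathbf{w}, \mathbf{z}] \in \mathfrak{B}_E$; the scaling has been chosen precisely to land on the boundary of the sublevel set.

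It then remains to estimate $\|[\mathbf{u}, \mathbf{v}] - [\mathbf{w}, \mathbf{z}]\|_\mathcal{H}^2$. By construction $\mathbf{u} - \mathbf{w} = (1 - \lambda)(\tilde u, \tilde u(0))$ and $\mathbf{v} - \mathbf{z} = (1-\lambda)\mathbf{v}$, so the gradient and $\mathbf{v}$ parts contribute $(1-\lambda)^2(\|\partial_x u\|_{L^2(\Omega)}^2 + \|\mathbf{v}\|_H^2)$. For the remaining $L^2$- and trace-contributions carried by $\tilde u$, I would invoke the Poincar\'e--Wirtinger inequality $\|\tilde u\|_{L^2(\Omega)} \leq C_P \|\partial_x u\|_{L^2(\Omega)}$ together with the one-dimensional embedding $H^1(\Omega) \hookrightarrow L^\infty(\Omega)$, which gives $|\tilde u(0)| \leq \|\tilde u\|_{L^\infty(\Omega)} \leq C \|\partial_x u\|_{L^2(\Omega)}$; this is exactly where the zero-mean normalization of $\tilde u$ is indispensable. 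Collecting terms yields $\operatorname{dist}([\mathbf{u},\mathbf{v}], \mathfrak{B}_E)^2 \leq K' (1-\lambda)^2 (\|\partial_x u\|_{L^2(\Omega)}^2 + \|\mathbf{v}\|_H^2) = 2K'(1-\lambda)^2 \mathcal{E}_0$ for some constant $K'$. I would then close the argument with the elementary bound $(1-\lambda)^2 \mathcal{E}_0 = (\sqrt{\mathcal{E}_0} - \sqrt{E})^2 \leq (\sqrt{\mathcal{E}_0} - \sqrt{E})(\sqrt{\mathcal{E}_0} + \sqrt{E}) = \mathcal{E}_0 - E = \{\mathcal{E}(\mathbf{u},\mathbf{v}) - E\}^+$, giving the claim with $K \triangleq 2K'$.

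The one genuinely delicate point, and the reason the statement is not completely routine, is that one must resist the temptation to contract the entire state by $w = \lambda u$: since $\mathcal{E}$ controls neither $\|u\|_{L^2(\Omega)}$ nor $u(0)$, such a uniform scaling would leave a displacement bounded only by $\|[\mathbf{u}, \mathbf{v}]\|_\mathcal{H}^2$, which need not be comparable to the excess energy $\{\mathcal{E} - E\}^+$. Isolating the energy-invisible constant mode $\bar u$ and recovering full control of the $V$-norm of the fluctuation $\tilde u$ from its gradient alone, via Poincar\'e--Wirtinger, is what makes the estimate homogeneous of the correct order and forces the bound to scale linearly in the excess energy.
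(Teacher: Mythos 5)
Your proof is correct and takes essentially the same route as the paper's: you isolate the energy-invisible constant mode (the paper does this via orthogonal projection onto $\ker \Phi$, where $\Phi$ is the mean-value functional), scale the remainder by $\lambda = (E/\mathcal{E}_0)^{1/2}$ to produce a competitor on the boundary of $\mathfrak{B}_E$, and control the displacement by the excess energy through the Poincar\'e--Wirtinger inequality. One small point in your favor: where the paper asserts the equality $\mathcal{E}(\Pi \mathbf{X} - \mathbf{Y}) = \mathcal{E}(\mathbf{X}) - E$, your difference-of-squares estimate $(\sqrt{\mathcal{E}_0} - \sqrt{E})^2 \leq \mathcal{E}_0 - E$ is the accurate (and entirely sufficient) statement, since the left-hand side actually equals $\mathcal{E}_0 - 2\sqrt{E \mathcal{E}_0} + E$.
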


\begin{proof}
Consider the continuous linear form $\Phi$ defined on $\mathcal{H}$ by
\begin{equation}
\Phi [\mathbf{u}, \mathbf{v}] \triangleq \frac{1}{L} \int_\Omega u(x) \, \mathrm{d}x,
\end{equation}
i.e., $\Phi$ gives the mean value of the position component; and recall the Poincar\'e-Wirtinger inequality:
\begin{equation}
 \|u - u_\Omega \|_{L^2(\Omega)}^2 \leq C_1 \| \partial_x u \|^2_{L^2(\Omega)} \quad \mbox{for all}~ u \in H^1(\Omega),
\end{equation}
where $u_\Omega$ denotes the mean value of $u$. Suppose that $ \mathbf{X} = [\mathbf{u}, \mathbf{v}] \in \ker \Phi$, i.e., $u_\Omega = 0$; then,
\begin{equation}
\label{eq:calc-PW}
\begin{aligned}
\|\mathbf u \|^2_V
&= \| u\|^2_{L^2(\Omega)} + |u(0)|^2 + \| \partial_x u \|^2_{L^2(\Omega)} \\
& \leq (1 + C_2) \|u \|^2_{L^2(\Omega)} + (1 + C_2) \|\partial_x u \|^2_{L^2(\Omega)} \\
& \leq (1+ C_1 + C_2 + C_1C_2) \| \partial_x u \|^2_{L^2(\Omega)},
\end{aligned}
\end{equation}
where the positive constant $C_2$ comes from the Lipschitz continuity of the evaluation mapping $u \mapsto u(0)$ with respect to the norm of $H^1(\Omega)$.
Since $\|\mathbf{X}\|^2_\mathcal{H} = \|\mathbf{u}\|^2_V + \|\mathbf{v}\|^2_H$ and
$
\mathcal{E}(\mathbf{u}, \mathbf{v}) = \frac{1}{2} \{ \|\mathbf{v}\|^2_H + a(\mathbf{v}, \mathbf{v})\}
$;
we infer from \eqref{eq:calc-PW} that there exist two positive constants $M_1$ and $M_2$ such that
\begin{equation}
\label{eq:eq-norm-ker}
M_1 \| \mathbf{X} \|^2_\mathcal{H} \leq \mathcal{E}(\mathbf{X}) \leq M_2 \| \mathbf{X} \|^2_\mathcal{H} \quad \mbox{for all}~ \mathbf{X} \in \ker \Phi.
\end{equation}
 Also, since,
$
(\ker \Phi)^\perp = \left \{ [\mathbf{u}, \mathbf{v}] \in \mathcal{H} : \mathbf{v} = 0, \partial_x u = 0  \right \}
$,
we remark that
\begin{equation}
\label{eq:invar-E}
\mathcal{E}(\mathbf{X} + \mathbf{Y}) = \mathcal{E}(\mathbf{X}) \quad \mbox{for all}~ \mathbf{X} \in \mathcal{H} ~\mbox{and}~ \mathbf{Y} \in (\ker \Phi)^\perp.
\end{equation}
Since $\ker \Phi$ is a closed subspace of $\mathcal{H}$, we can consider the orthogonal projection $\Pi$ onto $\ker \Phi$. By definition of $\mathcal{B}_E$, using \eqref{eq:invar-E}, we must have
$
\mathfrak{B}_E = \mathfrak{B}_E + (\ker \Phi)^\perp
$,
and as a consequence,
\begin{equation}
\label{eq:inv-dist}
\operatorname{dist}(\mathbf{X}, \mathfrak{B}_E) = \operatorname{dist}(\Pi \mathbf{X}, \mathfrak{B}_E) \quad \mbox{for all}~ \mathbf{X} \in \mathcal{H}.
\end{equation}
Let $\mathbf{X} = [\mathbf{u}, \mathbf{v}] \in \mathcal{H}$. We have the decomposition
$
\mathbf{X} = \Pi \mathbf{X} + [ \mathrm{id} - \Pi] \mathbf{X}
$.
First,
\begin{equation}
\begin{aligned}
\operatorname{dist}(\Pi \mathbf{X}, \mathfrak{B}_E)^2 &= \inf_{\mathbf{Y} \in \mathfrak{B}_E} \|\Pi \mathbf{X} - \mathbf{Y}\|^2_\mathcal{H}
\\
&\leq \inf_{\mathbf{Y} \in \ker \Phi \cap \mathfrak{B}_E} \|\Pi \mathbf{X} - \mathbf{Y}\|^2_\mathcal{H} \\
\end{aligned}
\end{equation}
Let $\mathbf{Y} \in \ker \Phi \cap \mathfrak{B}_E$. Then, using \eqref{eq:eq-norm-ker}
\begin{equation}
\label{eq:dist-Y}
\begin{aligned}
\operatorname{dist}(\Pi \mathbf{X}, \mathfrak{B}_E)^2 &\leq \| \Pi\mathbf{X} - \mathbf{Y}\|^2_\mathcal{H} \\
& \leq M_1^{-1} \mathcal{E}(\Pi \mathbf{X}- \mathbf{Y}).
\end{aligned}
\end{equation}
If $\mathbf{X}$ (or equivalently, $\Pi \mathbf{X}$) belongs to $\mathfrak{B}$, then the distance considered above is $0$, so we can assume that $\mathcal{E}(\mathbf{X}) > E$. Then,
\begin{subequations}
\begin{align}
&\mathbf{Y} \triangleq \{{E}/\mathcal{E}(\mathbf{X})\}^{1/2} \Pi \mathbf{X} \in \ker \Phi \cap\mathfrak{B}_E, \\
\label{eq:proj-Y}
&\mathcal{E}(\Pi \mathbf{X} - \mathbf{Y}) = \mathcal{E}(\mathbf{X}) - E.
\end{align}
\end{subequations}
Combining \eqref{eq:inv-dist}, \eqref{eq:dist-Y} and \eqref{eq:proj-Y}, we obtain the desired result:
\begin{equation}
\operatorname{dist}(\mathbf{X}, \mathfrak{B}_E)^2 \leq M_1^{-1} \{ \mathcal{E}(\mathbf{X}) - E\}^+.
\end{equation}
\end{proof}

\end{document}